\numberwithin{equation}{section}
\theoremstyle{plain}
\newtheorem{theorem}{Theorem}[section]
\newtheorem{lemma}[theorem]{Lemma}
\newtheorem{proposition}[theorem]{Proposition}
\theoremstyle{definition}
\newtheorem{example}[theorem]{Example}
\newcommand{\nn}{\nonumber}
\newcommand{\iy}{\infty}
\newcommand{\f}[2]{\frac{#1}{#2}}
\newcommand{\mc}[1]{\mathcal{#1}}
\newcommand{\half}{\f{1}{2}}
\newcommand{\rec}[1]{\frac{1}{#1}}
\newcommand{\Ind}{\mathbh{1}}
\newcommand{\sumin}{\sum_{i=1}^{n}}
\newcommand{\eqnref}[1]{$(\ref{#1})$}
\newcommand{\defMatrix}[1]{(\!(#1)\!)}
\newcommand{\eig}{\mathrm{eig}}
\renewcommand{\P}{\mathrm{P}}
\newcommand{\E}{\mathrm{E}}
\newcommand{\tr}{\mathrm{tr}}
\newcommand{\diag}{\mathrm{diag}}
\newcommand{\al}{\alpha}
\newcommand{\ga}{\gamma}
\newcommand{\de}{\delta}
\newcommand{\ve}{\varepsilon}
\newcommand{\ep}{\epsilon}
\newcommand{\Om}{\Omega}
\newcommand{\bX}{\bm{X}}
\newcommand{\bZ}{\bm{Z}}
\newcommand{\balpha}{\bm{\alpha}}
\newcommand{\bbeta}{\bm{\beta}}
\newcommand{\bGamma}{\bm{\Gamma}}
\newcommand{\bSigma}{\bm{\Sigma}}
\newcommand{\bOmega}{\bm{\Omega}}
\newcommand{\bmS}{\bm{S}}
\newcommand{\bmA}{\bm{A}}
\newcommand{\bmB}{\bm{B}}
\newcommand{\bmD}{\bm{D}}
\newcommand{\bmI}{\bm{I}}
\newcommand{\RR}{{\mathbb R}}
\mathchardef\given="626A
\long\def\beginskip#1\endskip{}
\begin{document}

\begin{frontmatter}
\title{Posterior convergence rates for estimating large precision
matrices using graphical models\thanksref{T1}}
\runtitle{Convergence rates for estimating large precision matrices}
\thankstext{T1}{Research is partially supported by NSF grant number
DMS-1106570.}

\begin{aug}
\author{\fnms{Sayantan} \snm{Banerjee}\corref{}\ead[label=e1]{SBanerjee@mdanderson.org}}
\address{Department of Biostatistics\\
The University of Texas MD Anderson Cancer Center\\
1400 Pressler Street\\
Houston, TX 77030\\
USA\\
\printead{e1}}
\end{aug}
\smallskip\textbf{\and}\vspace*{-6pt}
\begin{aug}
\author{\fnms{Subhashis} \snm{Ghosal}\ead[label=e2]{sghosal@stat.ncsu.edu}}
\address{Department of Statistics\\
North Carolina State University\\
4276 SAS Hall, 2311 Stinson Drive\\
Raleigh, NC 27695-8203\\
USA\\
\printead{e2}}

\runauthor{S. Banerjee and S. Ghosal}
\end{aug}

\begin{abstract}
We consider Bayesian estimation of a $p\times p$ precision matrix, when
$p$ can be much larger than the available sample size $n$. It is well
known that consistent estimation in such ultra-high dimensional
situations requires regularization such as banding, tapering or
thresholding. We consider a banding structure in the model and induce a
prior distribution on a banded precision matrix through a Gaussian
graphical model, where an edge is present only when two vertices are
within a given distance. For a proper choice of the order of graph, we
obtain the convergence rate of the posterior distribution and Bayes
estimators based on the graphical model in the $L_{\infty}$-operator
norm uniformly over a class of precision matrices, even if the true
precision matrix may not have a banded structure. Along the way to the
proof, we also compute the convergence rate of the maximum likelihood
estimator (MLE) under the same set of condition, which is of
independent interest. The graphical model based MLE and Bayes
estimators are automatically positive definite, which is a desirable
property not possessed by some other estimators in the literature. We
also conduct a simulation study to compare finite sample performance of
the Bayes estimators and the MLE based on the graphical model with that
obtained by using a Cholesky decomposition of the precision matrix.
Finally, we discuss a practical method of choosing the order of the
graphical model using the marginal likelihood function.
\end{abstract}

\begin{keyword}[class=AMS]
\kwd[Primary ]{62H12}
\kwd[; secondary ]{62F12}
\kwd{62F15}
\end{keyword}

\begin{keyword}
\kwd{Precision matrix}
\kwd{G-Wishart}
\kwd{convergence rate}
\end{keyword}

\received{\smonth{11} \syear{2013}}

\end{frontmatter}

\section{Introduction}
\label{sec:Intro}
Estimating a covariance matrix or a precision matrix (inverse
covariance matrix) is one of the most important problems in
multivariate analysis. Of special interest are situations where the
number of underlying variables $p$ is much larger than the sample size
$n$. These situations are common in gene expression data, fMRI data and
in several other modern applications. Special care needs to be taken
for tackling such high-dimensional scenarios. Conventional estimators
like the sample covariance matrix or maximum likelihood estimator
behave poorly when the dimension is much higher than the sample size.

Different regularization based methods have been proposed and
developed in the recent years for dealing with high-dimensional data.
These include banding, thresholding, tapering and penalization based
methods to name a few; see, for example, \cite{ledoit2004well,
huang2006covariance, yuan2007model, bickel2008covariance,
bickel2008regularized, karoui2008operator, friedman2008sparse,
rothman2008sparse, lam2009sparsistency, rothman2009generalized,
cai2010optimal, cai2011constrained}. Most of these regularization based
methods for high dimensional models impose a sparse structure in the
covariance or the precision matrix, as in \cite{bickel2008covariance},
where a rate of convergence has been derived for the estimator obtained
by ``banding'' the sample covariance matrix, or by banding the Cholesky
factor of the inverse sample covariance matrix, as long as $n^{-1}\log
p \rightarrow0$. Cai et al. \cite{cai2010optimal} obtained the minimax
rate under the operator norm and constructed a tapering estimator which
attains the minimax rate over a smoothness class of covariance
matrices. Cai and Liu \cite{cai2011adaptive} proposed an adaptive
thresholding procedure. More recently, Cai and Yuan \cite
{cai2012adaptive} introduced a data-driven block-thresholding estimator
which is shown to be optimally rate adaptive over some smoothness class
of covariance matrices.\looseness=1

There are only a few convergence results available in the Bayesian
setting for estimating large covariance or precision matrices. Ghosal
\cite{ghosal2000asymptotic} studied asymptotic normality of posterior
distributions for exponential families (which include the multivariate
normal scale family) when the dimension $p \rightarrow\infty$, but
restricting to the situation $p \ll n$. Recently, Pati et al. \cite
{pati2012factor} considered sparse Bayesian factor models for
dimensionality reduction in high dimensional problems and showed
consistency in the $L_2$-operator norm (also known as the spectral
norm) by using a point mass mixture prior on the factor loadings,
assuming such a factor model representation for the true covariance matrix.

Graphical models \cite{lauritzen1996graphical} provide an
excellent tool for sparse covariance or inverse covariance estimation;
see \cite{dobra2004sparse, meinshausen2006high, yuan2007model,
friedman2008sparse}, as they capture the conditional dependence between
the variables by means of a graph. Bayesian methods for inference using
graphical models have also been developed, as in \cite
{roverato2000cholesky, atay2005monte, letac2007wishart}. For a complete
graph corresponding to the saturated model, clearly the Wishart
distribution is the conjugate prior for the precision matrix $\bOmega$.
For an incomplete decomposable graph, a conjugate family of priors is
given by the $G$-Wishart prior \cite{roverato2000cholesky}. The
equivalent prior on the covariance matrix is termed as the hyper
inverse Wishart distribution in \cite{dawid1993hyper}. Letac and Massam
\cite{letac2007wishart} introduced a more general family of conjugate
priors for the precision matrix, known as the $W_{P_G}$-Wishart family
of distributions, which also has the conjugacy property. The properties
of this family of distribution were further explored in \cite
{rajaratnam2008flexible}. Rajaratnam et al. \cite
{rajaratnam2008flexible} also obtained expressions for Bayes estimators
under different loss functions.

In this paper, we consider Bayesian estimation of the precision
matrix working with a $G$-Wishart prior induced by a Gaussian graphical
model, which has a Markov property with respect to a decomposable graph
$G$. More specifically, we work with a Gaussian graphical model
structure which induces banding in the corresponding precision matrix.
Approximate banding structure for precision matrix can arise in certain
possibly non-stationary time series framework. Suppose that $\{X_t: t
=1,\ldots,p\}$ is a possibly non-stationary time series with
approximately Markov dependence on neighborhoods in the sense that
off-diagonal elements of its precision matrix decay sufficiently fast
with the lag. The covariances cannot be estimated based on a single
time series due to lack of stationarity. However, if we have
replications $\bm{X}_1,\ldots,\bm{X}_n$, even when $n$ is much smaller
than $p$, it is still possible to estimate the entire precision matrix
assuming the approximate Markov structure. The graphical model based on
the banding structure ensures the decomposability of the graph, along
with the presence of a perfect set of cliques, as explained in Section
\ref{sec:Notations}. For a $G$-Wishart prior, we can compute the
explicit expression of the normalizing constant of the corresponding
marginal distribution of the graph (see Section \ref{sec:bandestim}).
For arbitrary decomposable graphs, the computation of the normalizing
constant requires Markov chain Monte-Carlo (MCMC) based methods; see
\cite{atay2005monte, carvalho2007simulation, carvalho2009objective,
lenkoski2011computational, dobra2011bayesian}. We obtain posterior
convergence rate and convergence rate of the Bayes estimators and the
MLE for the graphical model based on banding on the precision matrix.
However, we allow the true precision matrix to be outside this class,
provided it is well-approximated by banded matrices in an appropriate sense.

The paper is organized as follows. In the next section, we discuss
some preliminaries on graphical models. In Section \ref
{sec:modelandprior}, we formulate the estimation problem and the
describe the corresponding model assumptions. Section \ref
{sec:mainresults} deals with the main results related to posterior
convergence rates. A method for selecting the banding parameter using
the explicit form of the marginal likelihood of a graph is discussed in
Section \ref{sec:bandestim}. In Section \ref{sec:sim}, we compare the
performance of the Bayesian estimators with that of the graphical
maximum likelihood estimator (MLE) and the banding estimator proposed
in \cite{bickel2008regularized}. Proofs of the results are presented in
Section \ref{sec:proofs}. Some auxiliary lemmas and their proofs are
given in the \hyperref[app]{Appendix}.

\section{Notations and preliminaries on graphical models}
\label{sec:Notations}
We first describe the notations to be used in this paper. By $t_n =
O(\delta_n)$ (respec\-tively, $o(\delta_n)$), we mean that $t_n/\delta_n$
is bounded (respectively, $t_n/\delta_n \rightarrow0$ as \mbox{$n\to\iy$}).
For a random sequence $X_n$, $X_n = O_P(\delta_n)$ (respectively, $X_n
= o_P(\delta_n)$) means that $\P(|X_n| \leq M\delta_n) \rightarrow1$
for some constant $M$ (respectively, $\P(|X_n| < \epsilon\delta_n)
\rightarrow1$ for all $\epsilon> 0$). For numerical sequences $r_n$
and $s_n$, by $ r_n \ll s_n$ (or,~$r_n \gg s_n)$ we mean that $r_n =
o(s_n)$, while by $s_n \gtrsim r_n$ we mean that $r_n = O(s_n)$. By
$r_n \asymp s_n$, we mean that $r_n = O(s_n)$ and $s_n = O(r_n)$, while
$r_n\sim s_n$ stands for $r_n/s_n\to1$. The indicator function is
denoted by $\Ind$.

We denote vectors by bold lowercase English or Greek letters. The
components of a vector are represented by the corresponding non-bold
letters, that is, for $\bm{x}\in\RR^p$, $\bm{x}= (x_1,\ldots
,x_p)^T$. We
define the following norms for a vector $\bm{x}\in\RR^p$:
$\|\bm{x}\|_r = (\sum_{j=1}^p|x_{j}|^r)^{1/r}$, $\|\bm
{x}\|_\infty
= \mathop{\max}_{j}|x_{j}|$. 
Matrices are denoted by bold uppercase English or Greek letters, like
$\bmA= (\!(a_{ij})\!)$, where $a_{ij}$ stands for the $(i,j)$th entry
of $\bm{A}$.
If $\bm{A}$ is a symmetric $p\times p$ matrix, let $\eig_1(\bm
{A})\le
\cdots\le\eig_p(\bm{A})$ stand for its ordered eigenvalues.
We consider the following norms on $p\times p$ matrices
\begin{eqnarray*}
&\|\bmA\|_r =\left(\sum_{i=1}^p |a_{ij}|^r\right)^{1/r}, \; 1\le
r<\iy,
\quad\|\bmA\|_\infty= \mathop{\max}_{i,j}|a_{ij}|, &\nonumber\\
&\|\bmA\|_{(r,s)} = \mbox{sup}\{\|\bmA\bm{x}\|_s:\|\bm{x}\|_r = 1\},
&\nonumber
\end{eqnarray*}
by respectively viewing $\bm{A}$ as a vector in $\RR^{p^2}$ and an
operator from $(\RR^p,\|\cdot\|_r)$ to $(\RR^p, \|\cdot\|_s)$, where
$1\le
r,s\le\iy$.
This gives
\begin{eqnarray*}
&\|\bmA\|_{(1,1)} = \mathop{\max}_{j} \sum_i |a_{ij}|, \quad\|\bmA
\|
_{(\infty,\infty)} = \mathop{\max}_{i} \sum_j |a_{ij}| &\nonumber
\\
&\|\bmA\|_{(2,2)} = \{\max(\eig_i(\bmA^T\bmA):1\le i\le p)\}^{1/2},
\nonumber
\end{eqnarray*}
and that for symmetric matrices, $\|\bmA\|_{(2,2)}= \max\{|\eig
_i(\bmA
)|:1\le i\le p\}$, and $\|\bmA\|_{(1,1)} = \|\bmA\|_{(\infty,\infty)}$.
The norm $\|\cdot\|_{(r,r)}$ will be referred to as the $L_r$-operator
norm. For two matrices $\bm{A}$ and $\bm{B}$, we say that $\bm{A}\ge
\bm
{B}$ (respectively, $\bm{A}> \bm{B}$) if $\bm{A}-\bm{B}$ is nonnegative
definite (respectively, positive definite). Thus $\bmA> \bm{0}$ for a
positive definite matrix $\bmA$, where $ \bm{0}$ stands for the zero
matrix. The identity matrix of order $p$ will be denoted by $\bm{I}_p$.
A vector of $1$'s is denoted by $\bm{1}$.

Sets are denoted by non-bold uppercase English letters. For a set $T$,
we denote the cardinality, that is, the number of elements in $T$, by
$\#T$. We denote the submatrix of the matrix $\bmA$ induced by the set
$T \subset\{1,2,\ldots,p\}$ by $\bmA_T$, i.e., $\bmA_T=\defMatrix
{a_{ij}:\, i,j\in T}$. By $\bmA_T^{-1}$, we mean the inverse $(\bmA
_T)^{-1}$ of the submatrix $\bmA_T$. For a $p\times p$ matrix $\bmA=
(\!(a_{ij})\!)$, let $(\bmA_T)^0 = (\!(a^*_{ij})\!)$ denote a
$p$-dimensional matrix such that $a^*_{ij} = a_{ij}$ for $(i,j) \in T
\times T$, and $0$ otherwise. Also we denote the ``banded'' version of
$\bmA$ by $B_k(\bmA) = (\!(a_{ij}\Ind\{|i-j|\leq k\})\!)$ corresponding
to banding parameter $k$, $k < p$.

\subsection{Preliminaries on graph theory}
Now we discuss some preliminaries on graph theory and undirected
graphical models needed to describe our results. Further details are
available in \cite{lauritzen1996graphical, letac2007wishart}.

An undirected graph $G = (V,E)$ consists of a non-empty vertex set $V =
\{1,2,\ldots,p\}$ along with an edge-set $E \subseteq\{(i,j)\in V
\times V:\, i<j\}$. Two vertices $v, v' \in V$ are said to be adjacent
if there is an edge between $v$ and $v'$. A graph is called complete if
all the vertices are adjacent to each other. A graph $G' = (V',E')$ is
a subgraph of $G = (V,E)$, denoted by $G' \subseteq G$ if $V' \subseteq
V$ and $E' \subseteq E$. For a subgraph $G' \subseteq G$, if $E' = (V'
\times V') \cap E$, then $G'$ is called an induced subgraph of $G$. For
a subset $V' \subseteq V$, we denote the subgraph $G_{V'} = (V', (V'
\times V') \cap E)$ to be the graph induced by $V'$. We shall only
consider induced subgraphs henceforth when we refer to subgraphs of a
graph. A subset $V' \subseteq V$ is said to be a clique of $G$ if the
subgraph $G_{V'}$ is a maximal complete subgraph of $G$, that is,
$G_{V'}$ is not contained in any other complete subgraph
of~$G$.\looseness=-1

A path in a graph is a finite collection of adjacent edges. If $G_1,
G_2$ and $G_3$ are subgraphs of $G$, then $G_3$ is said to separate
$G_1$ and $G_2$ if every path from $j \in G_1$ to $k \in G_2$ contains
a vertex in $G_3$. A graph $G$ decomposes in to disjoint subgraphs
$G_1, G_2$ and $G_3$ if (i) $G_1 \cup G_2 \cup G_3 = G$, (ii) $G_3$ is
complete, and (iii) $G_3$ separates $G_1$ and $G_2$. The decomposition
of a graph is proper if neither $G_1$ nor $G_2$ is empty. A graph is
decomposable if it is complete, or if there exists a proper
decomposition $(G_1, G_2,G_3)$ in decomposable subgraphs induced by the
vertices in $G_1 \cup G_3$ and $G_2 \cup G_3$. This is a recursive
definition which ultimately gives a sequence of cliques and separators
of the graph. One of the most important results in this context of
decomposability of a graph is that of perfect ordering of cliques. A
set of cliques $\mathcal{C}_G = \{C_1,C_2,\ldots,C_r\}$ is said to be
in perfect order, if the following holds: For
%
\begin{equation}
\begin{split}
H_1 = R_1 = C_1, \quad& H_j = C_1 \cup\cdots\cup C_j,\\
R_j = C_j \diagdown H_{j-1}, \quad& S_j = H_{j-1} \cap C_j,\,
j=2,\ldots,r,
\end{split}
\end{equation}
$\mathcal{S} = \{S_j,\, j=2,\ldots,p\}$ is the set of minimal
separators in $G$. The sets $H_j, R_j$ and $S_j$ are termed as
histories, residuals and separators of the sequence respectively.
For a decomposable graph, a perfect order of the cliques always exists.
Figure~\ref{fig:decomfig} illustrates a decomposable and a
non-decomposable graph.

\begin{figure}[t]
\includegraphics[width=4.8in,height=1.92in]{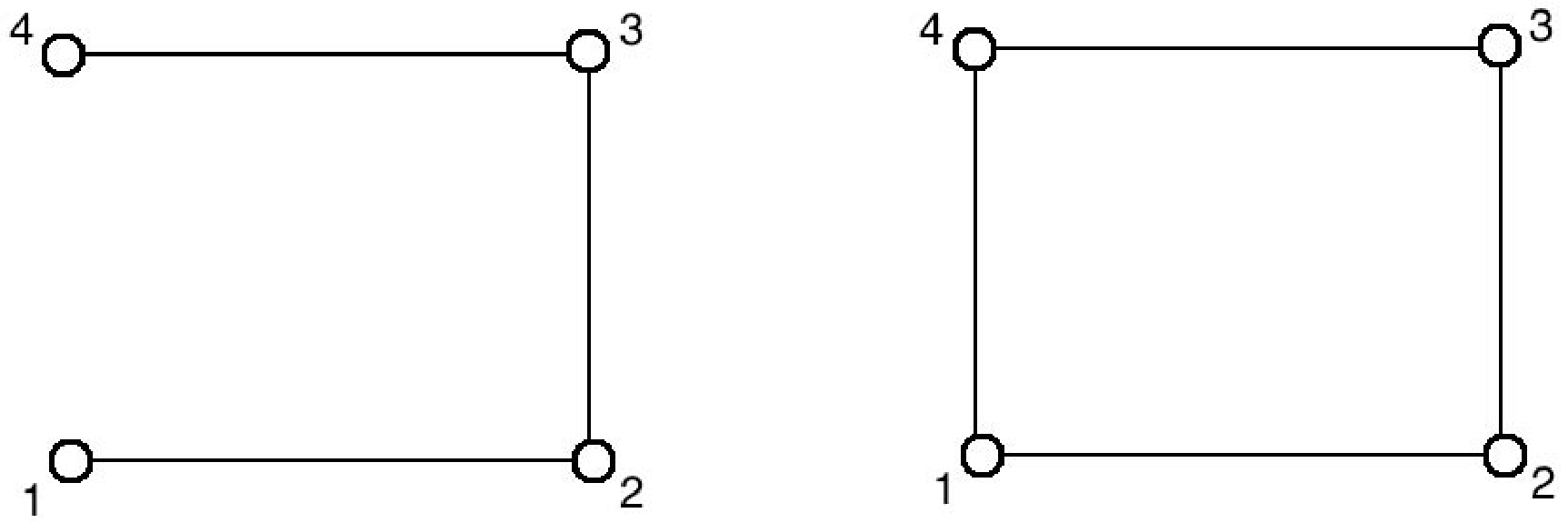}
\caption{[Left] An example of a decomposable graph with vertex set $V =
\{1,2,3,4\}$. $\{1,2\},\allowbreak \{2,3\}$ and $\{3,4\}$ are the cliques, whereas
$\{2\}$ and $\{3\}$ are the separators. [Right]~A~non-decomposable
graph with the same vertex set $V = \{1,2,3,4\}$. There are four
cliques $\{1,2\},\allowbreak \{1,4\},\{2,3\},\{3,4\}$, but they cannot be arranged
in a perfect order, violating the decomposability condition.}
\label{fig:decomfig}
\end{figure}

\subsection{Undirected Gaussian graphical models}

An undirected graph $G$ equipped with a probability distribution $P$
such that the vertex set $V = \{1,\ldots,p\}$ of $G$ corresponds to a
$p$-dimensional random variable $\bX= (X_1,X_2,\ldots,X_p)^T \sim P$,
and for any pair $(i,j)\not\in E$, $i\ne j$, the random variables
$X_i$ and $X_j$ are conditionally independent given all $X_k$, $k\ne
i,j$, is
referred to as an undirected graphical model $(G,P)$. The conditional
independence property is also called the Markov property of $\bX$ with
respect to the graph $G$. If $\bX$ has a multivariate normal
distribution, the graphical model is called a Gaussian graphical model
(GGM). If $\bX$ has mean $\bm{0}$ (without loss of generality) and
positive definite covariance matrix $\bSigma$, then $\bX$ has Markov
property with respect to $G$ if and only if $\omega_{ij} = 0$
for any pair $(i,j) \notin E$, $i\ne j$, where $\omega_{ij}$ is the
$(i,j)$th entry of
$\bOmega=\bSigma^{-1}$; see \cite{lauritzen1996graphical} for a proof.
Thus, for a GGM, absence of an edge between any two vertices is
equivalent to a zero entry in $\bOmega$. Figure~\ref{fig:figure1}
illustrates the connection between a banded precision matrix and the
corresponding graphical model.
In general in a graphical model for a non-Gaussian vector with finite
second moment, for any pair $(i,j) \notin E$, $i\ne j$, we have $\omega
_{ij} = 0$, but $\omega_{ij}=0$ does not imply that $(i,j)\not\in E$.

\begin{figure}[t]
\includegraphics[width=4.8in,height=1.92in]{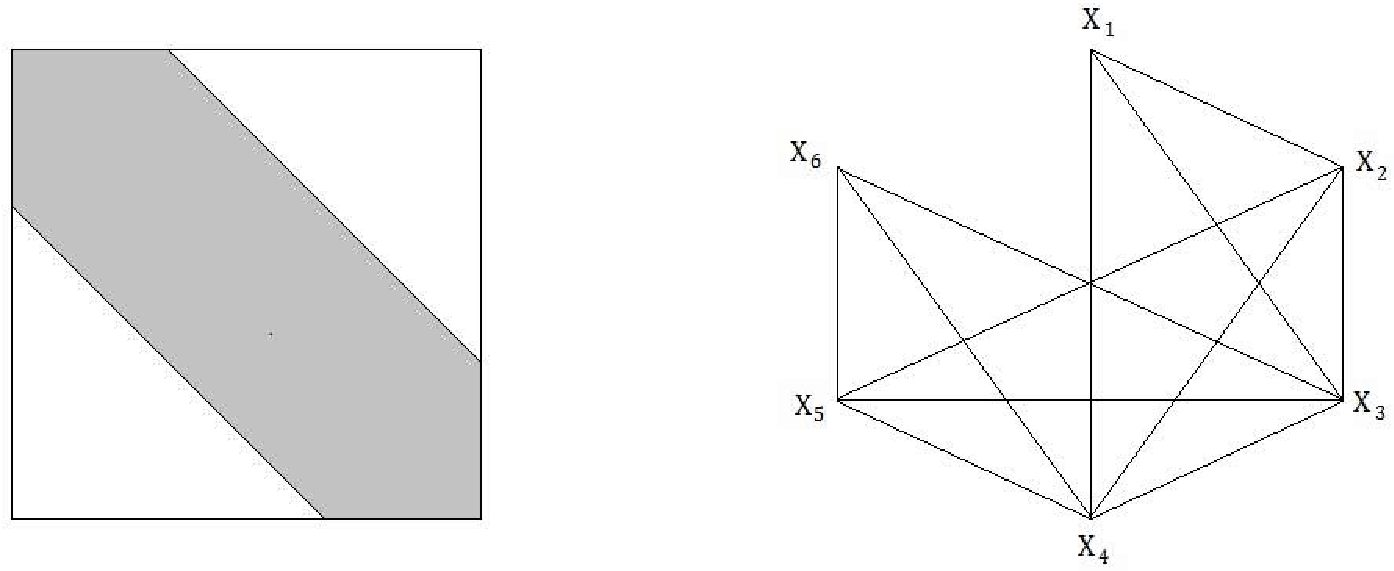}
\caption{[Left] Structure of a banded precision matrix with shaded
non-zero entries. [Right]~The~graphical model corresponding to a banded
precision matrix of dimension 6 and banding parameter 3.}
\label{fig:figure1}
\end{figure}

Let us denote the linear space of $p$-dimensional symmetric matrices by
$\mathcal{M}_p$ and $\mathcal{M}_p^+ \subset\mc{M}_p$ to be the cone
of positive definite matrices of order $p$. Following the notation in
\cite{letac2007wishart}, we can restrict the canonical parameter
$\bOmega$ in $\mathcal{P}_G$, where $\mathcal{P}_G$ is the cone of
positive definite symmetric matrices of order $p$ having zero entry
corresponding to each pair $(i,j)\not\in E$, $i\ne j$, that is,
%
\begin{equation}
\mathcal{P}_G = \{\bOmega= (\!(\omega_{ij})\!) \in\mathcal
{M}_p^+:\;
\omega_{ij} = 0 \ \mathrm{whenever}\,(i,j) \notin E, i\ne j\}.
\end{equation}
The linear space of symmetric incomplete matrices $\bmA= (\!(a_{ij})\!
)$ with missing entries $a_{ij},\, (i,j) \notin E$, will be denoted by
$\mathcal{I}_G$. Any such matrix $\bmA\in\mathcal{I}_G$ is said to be
partially positive definite over $G$ if for every clique $C \in\mc
{C}_G$, the corresponding submatrix $\bmA_{C}$ is positive definite. We
denote the cone of partially positive definite matrices by
%
\begin{equation}
\mathcal{Q}_G = \{\bmB\in\mathcal{I}_G:\; \bmB_{C_i} > \bm{0}, C_i
\in\mc{C}_G \}.
\end{equation}

Gr{\"o}ne et al. \cite{grone1984positive} proved that there is a
bijection between the spaces $\mathcal{P}_G$ and $\mathcal{Q}_G$ for
decomposable graphs $G$. Note that, for $(i,j) \notin E$, the
corresponding entries in $\bSigma$ are not free parameters of the
Gaussian model (see \cite{rajaratnam2008flexible}). For decomposable
graphs, Gr{\"o}ne et al. \cite{grone1984positive} also showed that for
every $\bSigma\in\mc{Q}_G$ there exists a unique positive definite
matrix $\bSigma^*$ such that $\bSigma^*_{ij} = \bSigma_{ij}$ for $(i,j)
\in E$ such that $\bSigma^*$ corresponds to the covariance matrix of a
Gaussian distribution which is Markov with respect to the underlying
graphical structure. Thus when $G$ is decomposable, the parameter space
for $\bSigma$ can be defined by the set of incomplete matrices which
are partially positive definite, that is, we can restrict the
covariance matrix $\bSigma$ in $\mc{Q}_G \subset\mc{I}_G$. More
precisely, we can define the parameter space for the GGM as the space
of incomplete matrices $\bSigma\in\mc{Q}_G$ such that $\bSigma=
\kappa(\bOmega^{-1}),\, \bOmega\in\mathcal{P}_G$, where $\kappa:
\mathcal{M}_p \rightarrow\mathcal{I}_G$ is the projection of
$\mathcal
{M}_p$ into~$\mathcal{I}_G$.\looseness=1

To give a simple example to illustrate the parameter spaces, consider
the decomposable graph as in the left side of Figure \ref
{fig:decomfig}, such that the underlying random variables $\bX=
(X_1,\ldots,X_4)$ corresponding to the vertices of respective indices
follow an autoregressive process of order $1$, with covariance between
$X_i$ and $X_j$ given by $0.7^{|i-j|}$, for $i = 1,\ldots,4,\, j =
1,\ldots,4$. The corresponding precision matrix $\bOmega= (\!(\omega
_{ij})\!)$ is given by
\begin{equation*}
\bOmega=
\begin{pmatrix}
1.961 & -1.373 & 0 & 0 \\
-1.373 & 2.922 & -1.373 & 0 \\
0 & -1.373 & 2.922 & -1.373 \\
0 & 0 & -1.373 & 1.961
\end{pmatrix}
.
\end{equation*}
Note that in the above example, $\omega_{ij} = 0$ whenever $(i,j)
\notin E$, $i\ne j$, $E$ being the edge-set in the underlying graph.
For the space of covariance matrices, it is enough to consider the
incomplete matrix $\bSigma= (\!(\sigma_{ij})\!)$, where $\sigma_{ij} =
0.7 ^ {|i - j|}$ for $(i,j) \in E$ and $\sigma_{ij}$ is a missing entry
otherwise. Clearly the submatrices of $\bSigma$ corresponding to the
cliques of the underlying graph are positive definite. This incomplete
matrix can be uniquely extended to the positive definite matrix
$\bSigma
^* = (\!(\sigma^*_{ij})\!)$, where $\sigma^*_{ij} = 0.7 ^ {|i-j|}$ for
all $i,j \in\{1,\ldots,4\}$.

We shall work only with decomposable Gaussian graphical models in this paper.

\section{Model assumption and prior specification}
\label{sec:modelandprior}
Let $\bX_1,\bX_2,\ldots,\bX_n$ be independent and identically
distributed (i.i.d.) random $p$-vectors with mean $\mathbf{0}$ and
covariance matrix $\bSigma$. Write $\bX_i = (X_{i1},X_{i2},\ldots,X_{ip})^T$,
and assume that the $\bX_i$, $i=1,\ldots,n$, are multivariate Gaussian.
Consistent estimators for the covariance matrix were obtained in \cite
{bickel2008regularized} by banding the sample covariance matrix,
assuming a certain sparsity structure on the true covariance. Our aim
is to obtain convergence rates of the graphical MLE and Bayes
estimators of the precision matrix $\bOmega= \bSigma^{-1}$ under the
condition $n^{-1}\log p \rightarrow0$ where $\bOmega$ ranges over some
fairly natural families. For a given positive sequence $\ga
(k)\downarrow0$, we consider the class of positive definite symmetric
matrices $\bOmega= (\!(\omega_{ij})\!)$ as
%
\begin{equation}
\label{matrixclass1}
\begin{split}
\mathcal{U}(\varepsilon_0,\gamma) &= \left\{\bOmega: \mathop{\max}_{j}
\sum_i\{|\omega_{ij}|: |i-j| > k\} \leq\gamma(k)\, \mathrm{for\,
all}\, k>0, \right. \\
&\qquad\left. \vphantom{\sum_i} 0< \varepsilon_0 \leq\eig
_1(\bOmega)
\leq\eig_p(\bOmega) \leq\varepsilon_0^{-1} < \infty\right\}.
\end{split}
\end{equation}
The sequence $\gamma(k)$ which bounds $\|\bOmega-B_k(\bOmega)\|
_{(\iy
,\iy)}=\mathop{\max}_{j} \sum_i\{|\omega_{ij}|:\break  |i-j| > k\}$ has been
kept flexible so as to include a number of matrix classes.
\begin{enumerate}
\item Exact banding: $\gamma(k) = 0$ for all $k \geq k_0$, which means
that the true precision matrix is banded, with banding parameter $k_0$.
For instance, any autoregressive process has such a form of precision matrix.
\item Exponential decay: $\gamma(k) = e^{-ck}$. For instance, any
moving average process has such a form of precision matrix.
\item Polynomial decay: $\gamma(k) = \gamma k^{-\alpha}$, $\alpha> 0$.
This class of matrices was considered in \cite{bickel2008regularized,
cai2012adaptive}.
\end{enumerate}
We shall estimate the precision matrix which belongs $\mathcal
{U}(\varepsilon_0,\gamma)$ for some $\ve_0$ and $\ga(\cdot)$ by
restricting the prior on a sieve of banded matrices.
A banding structure in the precision matrix can be induced by a
Gaussian graphical model. Since $\omega_{ij} = 0$ implies that the
components $X_i$ and $X_j$ of $\bX$ are conditionally independent given
the others, we can thus define a Gaussian graphical model $G =(V,E)$,
where $V=\{1,\ldots,p\}$ indexing the $p$ components $X_1,X_2,\ldots,X_p$,
and $E$ is the corresponding edge set defined by $E=\{(i,j):0<|i-j|\leq
k\}$, and $k$ is the size of the band. This describes a parameter space
for precision matrices consisting of $k$-banded matrices, and can be
used for the maximum likelihood or the Bayesian approach, where for the
latter, a prior distribution on these matrices must be specified.

It is not difficult to check that $G$ is an undirected, decomposable
graphical model for which a perfect order of cliques exist, given by
$\mathcal{C} = \{C_1,C_2,\ldots,C_{p-k}\}$, $C_j = \{j,j+1,\ldots
,j+k\}
$, $j=1,2,\ldots,p-k$. The corresponding separators are given by
$\mathcal{S}=\{S_2,S_3,\ldots,S_{p-k}\}$, $S_j = \{j,j+1,\ldots
,j+k-1\}
$, $j=2,3,\ldots,p-k$. The choice of the perfect set of cliques is not
unique, but the estimator for the precision matrix $\bOmega$ under all
choices of the order remains the same. We shall work with the
$G$-Wishart distribution $W_G(\delta,\bmD)$ as a conjugate prior for
$\bOmega\in\mc{P}_G$. The prior density, derived by \cite
{roverato2000cholesky}, is given by
%
\begin{equation}
\label{eqn:Gdensity}
p(\bOmega|G) = (I_G(\delta,\bmD))^{-1}(\det(\bOmega))^{(\delta
-2)/2}\,
\mbox{exp}\left[-\frac{1}{2}\mbox{tr}(\bmD\bOmega)\right],
\end{equation}
where $\bmD$ is a symmetric positive definite matrix and
%
\begin{equation}
\label{eqn:Gnorm}
I_G(\delta,\bmD) = \int_{\bOmega\in\mathcal{P}_G} (\det(\bOmega
))^{(\delta-2)/2}\,\mbox{exp}\left[-\frac{1}{2}\mbox{tr}(\bmD
\bOmega
)\right]\, d\bOmega
\end{equation}
is the normalizing constant, which is finite for $\delta> 2$.
Letac and Massam \cite{letac2007wishart} introduced a more general
family of conjugate priors, called the $W_{P_G}$-Wishart family, as a
prior distribution for $\bOmega$. The $W_{P_G}$-Wishart distribution
$W_{P_G}(\balpha,\bbeta,\bmD)$ has three set of parameters $\balpha$,
$\bbeta$ and $\bmD$,
where $\balpha$ and $\bbeta$ are suitable functions defined on the
cliques and separators of the graph respectively, and $\bmD$ is a
scaling matrix.
The $G$-Wishart distribution $W_G(\de,\bmD)$ is a special case of the
$W_{P_G}$-Wishart family where
%
\begin{equation}
\begin{split}
\alpha_i &= -\frac{\delta+ \# C_i -1 }{2}, \, i = 1,2,\ldots,p-k, \\
\beta_i &= -\frac{\delta+ \# S_i -1 }{2},\, i=2,3,\ldots,p-k.
\end{split}
\end{equation}
If the prior distribution on $\frac{1}{2}\bOmega$ is $W_{P_G}(\balpha
,\bbeta,\bmD)$, then the posterior distribution of $\frac
{1}{2}\bOmega$
given the sample covariance $\bmS= n^{-1}\sum_{i=1}^{n}\bX_i\bX_i^T$
is given by $W_{P_G}(\balpha-\frac{n}{2}\bm{1},\bbeta-\frac
{n}{2}\bm
{1},\bmD+ \kappa(n\bmS))$. For the $G$-Wishart distribution in our
case, $\#C_i = k+1$ for all $i = 1,2,\ldots,p-k $, and $\#S_j =k$ for
all $j = 2,3,\ldots,p-k$. Thus
%
\begin{equation}
\begin{split}
\alpha_i &= -\frac{\delta+ k}{2}, \, i = 1,2,\ldots,p-k, \\
\beta_j &= -\frac{\delta+ k -1 }{2},\, j=2,3,\ldots,p-k.
\end{split}
\end{equation}

The posterior mean of $\bOmega$ was derived in \cite
{rajaratnam2008flexible}, and is given by
%
\begin{equation}
\begin{split}
\E(\bOmega|\bmS) &= -2\left[\sum_{j=1}^{p-k}(\alpha_j-\frac
{n}{2})\left
((\bmD+\kappa(n\bmS))_{C_j}^{-1}\right)^0 \right. \\
&\qquad\qquad\left. -\sum_{j=2}^{p-k}(\beta_j-\frac{n}{2})\left
((\bmD
+\kappa(n\bmS))_{S_j}^{-1}\right)^0\right].
\end{split}
\end{equation}
Taking $\bmD= \bmI_p$, the $p$ dimensional indicator matrix, and
plugging in the values of $\balpha$ and $\bbeta$, we get the posterior
mean with respect to the $G$-Wishart prior $W_G(\de,\bm{I}_p)$ as,
%
\begin{equation}
\label{eqn:Bayesestimator}
\begin{split}
\E(\bOmega|\bmS) &=\f{\delta+k+n}{n}\left[\sum_{j=1}^{p-k}\left
((n^{-1}\bmI_{k+1}+\bmS_{C_j})^{-1}\right)^0 \right. \\
&\qquad\left. -\sum_{j=2}^{p-k}\left((n^{-1}\bmI_{k}+\bmS
_{S_j})^{-1}\right)^0 \right] + n^{-1}\sum_{j=2}^{p-k}\left
((n^{-1}\bmI
_{k}+\bmS_{S_j})^{-1}\right)^0.
\end{split}
\end{equation}
For a sample of size $n$ from a $p$-dimensional Gaussian distribution
with mean $\bm{0}$ and precision matrix $\bOmega$, we consider the
following two loss functions:
%
\begin{equation}
\begin{split}
\mbox{Stein's loss: }& L_1(\widehat{\bOmega},\bOmega) = \frac
{1}{2}\tr
(\widehat{\bOmega}\bOmega^{-1}) - \log|\widehat{\bOmega}\bOmega
^{-1}| -
p, \\
\mbox{Squared-error loss: }& L_2(\widehat{\bOmega},\bOmega) = \tr
(\widehat{\bOmega} - \bOmega)^2,
\end{split}
\end{equation}
for an arbitrary estimator $\widehat{\bOmega}$ of $\bOmega$. The Bayes
estimators corresponding to the above two loss\vadjust{\goodbreak} functions were derived
in \cite{rajaratnam2008flexible}. Under the $G$-Wishart prior $W_G(\de
,\bm{I}_p)$, the Bayes estimator $\widehat{\bOmega}_{L_1}^{\mathrm{B}}$
corresponding to Stein's loss function is given by
%
\begin{equation}
\f{\delta+n-2}{n}\left[\sum_{j=1}^{p-k}\left((n^{-1}\bmI
_{k+1}+\bmS
_{C_j})^{-1}\right)^0 -\sum_{j=2}^{p-k}\left((n^{-1}\bmI_{k}+\bmS
_{S_j})^{-1}\right)^0 \right].
\end{equation}
For the squared-error loss function, the corresponding Bayes estimator
is clearly the posterior mean of $\bOmega$ given in (\ref
{eqn:Bayesestimator}). We denote this estimator by $\widehat{\bOmega
}_{L_2}^{\mathrm{B}}$. Some other loss functions for estimation of
$\bOmega$ have also been considered in the literature; see \cite
{yangberger1994}.

The graphical MLE for $\bOmega$ under the graphical model with banding
parameter $k$ is given by (see \cite{lauritzen1996graphical}),
%
\begin{equation}
\label{graphical mle}
\widehat{\bOmega}^{\mathrm{M}} = \sum_{j=1}^{p-k}(\bmS
_{C_j}^{-1})^0 -
\sum_{j=2}^{p-k}(\bmS_{S_j}^{-1})^0.
\end{equation}

\section{Main results}
\label{sec:mainresults}
In this section, we determine the convergence rate of the posterior
distribution of the precision matrix. The following theorem describes
the behavior of the entire posterior distribution.

\begin{theorem}
\label{theorem:Posteriorconvergence}
Let $\bX_1,\ldots,\bX_n$ be random samples from a $p$-dimensional Gaussian
distribution with mean zero and true precision matrix $\bOmega_0\in
\mathcal{U}(\ve_0,\gamma)$ for some $\ve_0>0$ and $\ga(\cdot)$
such that
$k^{3/2}\gamma(k)\to0$ as $k\to\infty$. Suppose that $\bOmega$ is
given the $G$-Wishart prior $W_G(\de, \bm{I}_p)$, where the graph $G$
has banding of order $k$. Then posterior distribution of the precision
matrix $\bOmega$ satisfies
%
\begin{equation}
\E_0\left[\P\left\{\|\bOmega- \bOmega_0\|_{(\infty,\infty)} >
M\epsilon
_{n,k} | \bX_1,\ldots,\bX_n\right\}\right] \rightarrow0
\end{equation}
for $\epsilon_{n,k} = k^{5/2}(n^{-1} \log p)^{1/2} + k^{3/2}\gamma(k)$
and a sufficiently large constant $M>0$.

In particular, the posterior distribution is consistent in the $L_\iy
$-operator norm if $k\to\iy$ such that $k^5 n^{-1}\log p \to0$.
\end{theorem}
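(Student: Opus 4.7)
The plan is to control $\|\bOmega-\bOmega_0\|_{(\infty,\infty)}$ by triangulating through the graphical MLE $\widehat{\bOmega}^{\mathrm{M}}$ of (\ref{graphical mle}) and its population analogue
\begin{equation*}
\widetilde{\bOmega}_0 \;=\; \sum_{j=1}^{p-k}\bigl(\bSigma_{0,C_j}^{-1}\bigr)^0 \;-\; \sum_{j=2}^{p-k}\bigl(\bSigma_{0,S_j}^{-1}\bigr)^0,
\end{equation*}
and to bound each of the three resulting summands by a multiple of $\epsilon_{n,k}$: the approximation bias $\|\widetilde{\bOmega}_0-\bOmega_0\|_{(\infty,\infty)}$ deterministically by $\gamma(k)$, the MLE variance $\|\widehat{\bOmega}^{\mathrm{M}}-\widetilde{\bOmega}_0\|_{(\infty,\infty)}$ by $k^{5/2}\sqrt{n^{-1}\log p}$ in $\P_0$-probability, and the posterior fluctuation $\|\bOmega-\widehat{\bOmega}^{\mathrm{M}}\|_{(\infty,\infty)}$ by the same rate in posterior probability.

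For the MLE variance term I would treat each clique submatrix separately via the resolvent identity $\bmS_{C_j}^{-1}-\bSigma_{0,C_j}^{-1}=\bSigma_{0,C_j}^{-1}(\bSigma_{0,C_j}-\bmS_{C_j})\bmS_{C_j}^{-1}$ and the bound $\|\cd\|_{(\infty,\infty)}\le \sqrt{k+1}\,\|\cd\|_{(2,2)}$ valid for matrices of order $k+1$, combined with submultiplicativity of the spectral norm, the eigenvalue bounds $\|\bSigma_{0,C_j}^{-1}\|_{(2,2)}\le \ep_0^{-1}$ supplied by interlacing together with Lemma \ref{lemma:classequivalence}, and the classical sub-Gaussian tail bound $\|\bmS-\bSigma_0\|_\infty=O_P(\sqrt{n^{-1}\log p})$. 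This yields a per-clique estimate $\|\bmS_{C_j}^{-1}-\bSigma_{0,C_j}^{-1}\|_{(\infty,\infty)}=O_P(k^{3/2}\sqrt{n^{-1}\log p})$ uniformly over $j$ via a union bound over the $p-k$ cliques and separators. Since any given row indexes at most $k+1$ cliques in the perfect order, summing the matrices $(\,\cd\,)^0$ across the clique/separator decomposition multiplies the per-clique rate by another factor of $k$, giving the stated $k^{5/2}$.

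For the posterior fluctuation term, I would use the conjugacy noted in \S\ref{sec:modelandprior}: the posterior is $W_G(\de+n,\bmI_p+n\bmS)$, and the hyper-Markov property allows clique-by-clique sampling in which $\bOmega_{C_j}$ is an ordinary Wishart of order $k+1$ on $\delta+n-k$ degrees of freedom with scale $(\bmI_p+n\bmS)_{C_j}^{-1}$. A Davidson--Szarek type spectral concentration for each such Wishart matrix, combined with a union bound over the $p-k$ cliques (the $\log p$ factor enters exactly here), gives spectral norm deviation of each clique submatrix from the plug-in posterior mean at rate $k/\sqrt{n}$, and repeating the $\sqrt{k+1}$ conversion and the clique-count multiplication used above produces the identical $k^{5/2}\sqrt{n^{-1}\log p}$ rate with posterior probability tending to one in $\P_0$-probability. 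The bias bound $\|\widetilde{\bOmega}_0-\bOmega_0\|_{(\infty,\infty)}\lesssim \gamma(k)$ would be packaged as an auxiliary lemma: $\widetilde{\bOmega}_0=\bOmega_0$ when $\bOmega_0$ is exactly $k$-banded, and a first-order expansion of the clique inverses around $\bSigma_{B_k(\bOmega_0)}$ together with the uniform eigenvalue bounds and $\|\bOmega_0-B_k(\bOmega_0)\|_{(\infty,\infty)}\le \gamma(k)$ transfers the $L_\infty$-operator distance from precision to the target banded functional.

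The main obstacle I anticipate is the bias step: the entries of $\bOmega_0$ outside the band perturb the entire $\bSigma_0$ and hence every clique submatrix $\bSigma_{0,C_j}$, so one must verify that the resulting perturbation in the clique inverses does not accumulate worse than $\gamma(k)$ once summed over the perfect-order decomposition. A secondary technical point is uniformizing the Wishart spectral tail over all $O(p)$ cliques so that only $\sqrt{\log p}$ (rather than $\sqrt{p}$) enters the rate; this is where the union bound and the independence structure in the clique factorization need to be used carefully. The consistency statement for $k^5 n^{-1}\log p\to 0$ follows from the main bound by observing $\epsilon_{n,k}\to 0$ once $k\to\iy$ and $\gamma(k)\to 0$.
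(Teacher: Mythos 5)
Your architecture is the same as the paper's: establish the frequentist rate of a banded point estimator (bias $\gamma(k)$ plus a $k^{5/2}\sqrt{n^{-1}\log p}$ stochastic term obtained from per-clique resolvent bounds, the $\sqrt{k}$ norm conversion, and the observation that each row meets only $O(k)$ cliques), and then add a posterior-concentration term of the same order using the conjugate $G$-Wishart structure of the clique marginals with a union bound over cliques. Two points in your posterior-fluctuation step need repair. First, it is not $\bOmega_{C_j}$ that is Wishart under the posterior: for an incomplete decomposable graph the clique submatrix of the precision matrix has no tractable marginal law. What is (inverse-)Wishart is the clique marginal $\bSigma_{C_j}$, hence $\bSigma_{C_j}^{-1}$ is Wishart, and the concentration argument must be routed through the decomposition $\bOmega=\sum_j(\bSigma_{C_j}^{-1})^0-\sum_j(\bSigma_{S_j}^{-1})^0$ of (\ref{eqn:OmegaandW}); the paper also rescales each clique by the diagonal $\tau_{in}$ and verifies $\max_i\tau_{in}=O_P(n^{-1})$, $\max_i\tau_{in}^{-1}=O_P(n)$ so that the Wishart scale matrices have uniformly bounded spectra before applying the tail bound (it uses the entrywise Bickel--Levina inequality rather than Davidson--Szarek, arriving at a $k^2\sqrt{n^{-1}\log p}$ fluctuation, which is dominated by $\epsilon_{n,k}$; your spectral route gives the same order). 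Second, the Wishart clique blocks concentrate around their \emph{posterior means}, i.e.\ around $\widehat{\bOmega}_{L_2}^{\mathrm{B}}$, not around $\widehat{\bOmega}^{\mathrm{M}}$; centering at the MLE as you propose requires additionally the bound $\|\widehat{\bOmega}_{L_2}^{\mathrm{B}}-\widehat{\bOmega}^{\mathrm{M}}\|_{(\infty,\infty)}=O_P(k^{5/2}/n)$ of Lemma \ref{lemma:Bayesmle}, which your sketch omits (it is negligible relative to $\epsilon_{n,k}$, but it must be said). Your flagged obstacle about the bias term $\|\widetilde{\bOmega}_0-\bOmega_0\|_{(\infty,\infty)}\lesssim\gamma(k)$ is a legitimate concern --- the paper handles it rather tersely by identifying $B_k(\bOmega_0)$ with the clique decomposition --- so your proposed first-order expansion there is a reasonable, if more laborious, way to make that step airtight.
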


An important step towards the proof of the above result is to find the
convergence rate of the graphical MLE, which is also of independent
interest. For high-dimensional situations, even when the sample
covariance matrix is singular, the graphical MLE will be positive
definite if the number of elements in the cliques of the corresponding
graphical model is less than the sample size.

Convergence results for banded empirical covariance (or precision)
matrix or estimators based on thresholding approaches are typically
given in terms of the $L_2$-operator norm in the literature. We however
use the stronger $L_\iy$-operator norm (or equivalently, $L_1$-operator
norm), so the implication of a convergence rate in our theorems is stronger.

\begin{proposition}
\label{theorem:mleconvergence}
Let $\bX_1,\ldots,\bX_n$ be random samples from a $p$-dimensional Gaussian
distribution with mean zero and precision matrix $\bOmega_0\in
\mathcal
{U}(\ep_0,\gamma)$ for some $\ep_0>0$ and $\ga(\cdot)$ such that
$k^{3/2}\gamma(k)\to0$ as $k\to\infty$. Then\vadjust{\goodbreak} the graphical MLE
$\widehat{\bOmega}^{\mathrm{M}}$ of $\bOmega$, corresponding to the
Gaussian graphical model with banding parameter $k$, has convergence
rate given by
%
\begin{equation}
\|\widehat{\bOmega}^{\mathrm{M}} - \bOmega_0\|_{(\infty,\infty)} =
O_P\left(k^{5/2}(n^{-1} \log p)^{1/2} + k^{3/2}\gamma(k)\right).
\end{equation}
In particular, $\widehat{\bOmega}^{\mathrm{M}}$ is consistent in the
$L_\iy$-operator norm if $k\to\iy$ such that $k^5 n^{-1}\log p \to0$.
\end{proposition}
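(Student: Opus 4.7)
The plan is to split the error into a stochastic piece and a deterministic approximation piece via the oracle counterpart of $\widehat{\bOmega}^{\mathrm{M}}$,
\[
\tilde\bOmega \;=\; \sum_{j=1}^{p-k}\bigl((\bSigma_0)_{C_j}^{-1}\bigr)^{0} \;-\; \sum_{j=2}^{p-k}\bigl((\bSigma_0)_{S_j}^{-1}\bigr)^{0},
\]
obtained by substituting the population covariance $\bSigma_0 = \bOmega_0^{-1}$ for $\bmS$ in formula \eqref{graphical mle}. By the triangle inequality,
\[
\|\widehat{\bOmega}^{\mathrm{M}} - \bOmega_0\|_{(\infty,\infty)} \;\le\; \|\widehat{\bOmega}^{\mathrm{M}} - \tilde\bOmega\|_{(\infty,\infty)} + \|\tilde\bOmega - \bOmega_0\|_{(\infty,\infty)},
\]
and I would treat the two contributions separately, targeting the stochastic bound $O_P(k^{5/2}(n^{-1}\log p)^{1/2})$ and the deterministic bound $O(\gamma(k))$.

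For the stochastic piece I would work clique by clique. Gaussian concentration of the sample covariance entries, combined with a union bound over pairs of indices, gives $\max_{i,j}|S_{ij}-(\bSigma_0)_{ij}| = O_P((n^{-1}\log p)^{1/2})$; here I use that $\bSigma_0$ has uniformly bounded operator norm via Lemma \ref{lemma:classequivalence}. For each clique $C_l$ of size $k+1$, this yields $\|\bmS_{C_l}-(\bSigma_0)_{C_l}\|_{(\infty,\infty)} = O_P(k(n^{-1}\log p)^{1/2})$ uniformly in $l$. I would then use the perturbation identity $A^{-1}-B^{-1}=-A^{-1}(A-B)B^{-1}$, with the submatrix inverse bound $\|(\bSigma_0)_{C_l}^{-1}\|_{(2,2)} \le 1/\ep_0$ (Cauchy interlacing of eigenvalues) followed by the elementary inequality $\|A\|_{(\infty,\infty)} \le \sqrt{k+1}\,\|A\|_{(2,2)}$ for $(k+1)\times(k+1)$ matrices, to obtain $\|(\bSigma_0)_{C_l}^{-1}\|_{(\infty,\infty)} \le \sqrt{k+1}/\ep_0$, with the analogous bound for $\bmS_{C_l}^{-1}$ on the high-probability event where the entrywise concentration holds. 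Together these give $\|\bmS_{C_l}^{-1}-(\bSigma_0)_{C_l}^{-1}\|_{(\infty,\infty)} = O_P(k^2(n^{-1}\log p)^{1/2})$ uniformly in $l$. The last step is to observe that every row index $i$ lies in at most $k+1$ of the cliques $\{C_l\}$ (and at most $k$ of the separators), so the row sums of the difference pick up one further factor of $k+1$, yielding the claimed $O_P(k^{5/2}(n^{-1}\log p)^{1/2})$.

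For the deterministic piece I would use the decomposable-graph identity
\[
\bOmega = \sum_{j=1}^{p-k} \bigl((\bOmega^{-1})_{C_j}^{-1}\bigr)^0 - \sum_{j=2}^{p-k} \bigl((\bOmega^{-1})_{S_j}^{-1}\bigr)^0, \qquad \bOmega \in \mathcal{P}_G,
\]
applied to the banded truncation $B_k(\bOmega_0)$, which lies in $\mathcal{P}_G$ and is positive definite once $\gamma(k)$ is sufficiently small (by Weyl's inequality, since $\|\bOmega_0 - B_k(\bOmega_0)\|_{(2,2)} \le \|\bOmega_0 - B_k(\bOmega_0)\|_{(\infty,\infty)} \le \gamma(k)$ for symmetric matrices). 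Subtracting this identity from the definition of $\tilde\bOmega$ reduces matters to controlling $\|\bSigma_0 - (B_k(\bOmega_0))^{-1}\|_{(\infty,\infty)}$, which another application of the perturbation identity, now comparing $\bOmega_0$ and $B_k(\bOmega_0)$, shows is $O(\gamma(k))$. Combined with the direct bound $\|B_k(\bOmega_0) - \bOmega_0\|_{(\infty,\infty)} \le \gamma(k)$, the triangle inequality delivers $\|\tilde\bOmega - \bOmega_0\|_{(\infty,\infty)} = O(\gamma(k))$.

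The main obstacle is the bookkeeping of $k$-factors in the stochastic step. Because the theorem is stated in the $L_\infty$-operator norm (strictly stronger than $L_2$), each passage between these norms on a submatrix costs a $\sqrt{k+1}$, and summing along a row forces an additional factor of $k+1$ from the overlapping cliques; balancing these precisely is what produces the exponent $5/2$. A secondary technical point is verifying that every submatrix $\bmS_{C_l}$ is well conditioned simultaneously for $l=1,\ldots,p-k$, which I would handle by a union bound of the entrywise concentration inequality combined with a Neumann series argument that transfers the eigenvalue bound from the population to the sample submatrices.
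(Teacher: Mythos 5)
Your overall architecture---an oracle decomposition through the population-covariance version of the clique-marginal formula, counting the cliques that meet a given row, the inverse-perturbation identity, and union bounds over cliques---is the same as the paper's. However, the norm bookkeeping in your stochastic step does not deliver the exponent $5/2$; it delivers $3$. You bound each of $\|(\bSigma_0)_{C_l}^{-1}\|_{(\infty,\infty)}$ and $\|\bmS_{C_l}^{-1}\|_{(\infty,\infty)}$ by $O(\sqrt{k})$ and $\|\bmS_{C_l}-(\bSigma_0)_{C_l}\|_{(\infty,\infty)}$ by $O_P(k(n^{-1}\log p)^{1/2})$, so applying $A^{-1}-B^{-1}=-A^{-1}(A-B)B^{-1}$ \emph{in the $L_\infty$-operator norm} gives, as you state, $\|\bmS_{C_l}^{-1}-(\bSigma_0)_{C_l}^{-1}\|_{(\infty,\infty)}=O_P(k^{2}(n^{-1}\log p)^{1/2})$ per clique; multiplying by the $O(k)$ cliques and separators containing a given row then yields $O_P(k^{3}(n^{-1}\log p)^{1/2})$. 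Your final step adds the exponents to $5/2$, which is arithmetically wrong. The paper avoids the extra $\sqrt{k}$ by applying the perturbation identity in the spectral norm, where both clique inverses are $O_P(1)$ (by interlacing for $(\bSigma_0)_{C_l}$, and by Lemma \ref{lemmaA4} together with a union bound for $\bmS_{C_l}$), so that $\|\bmS_{C_l}^{-1}-(\bSigma_0)_{C_l}^{-1}\|_{(2,2)}=O_P(k(n^{-1}\log p)^{1/2})$, and only then pays a single $\sqrt{k}$ to pass to $\|\cdot\|_{(\infty,\infty)}$: the count is $k\cdot\sqrt{k}\cdot k=k^{5/2}$. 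The fix to your argument is simply to reorder these two steps---do the perturbation bound in $\|\cdot\|_{(2,2)}$ first and convert norms once, at the end.

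A second issue is the deterministic piece. Routing the bias through the clique-marginal identity applied to $B_k(\bOmega_0)$ forces you to push $\|\bSigma_0-(B_k(\bOmega_0))^{-1}\|=O(\gamma(k))$ back through the same clique-by-clique machinery, which by the bookkeeping above costs at least an additional factor $k^{3/2}$; the steps you describe therefore give $\|\tilde\bOmega-\bOmega_0\|_{(\infty,\infty)}=O(k^{3/2}\gamma(k))$, not the $O(\gamma(k))$ you assert, and this would contaminate the final rate whenever $\gamma(k)$ is the dominant term. (To be fair, the paper is itself cavalier here: it identifies $B_k(\bOmega_0)$ with $\sum_j((\bSigma_0)_{C_j}^{-1})^0-\sum_j((\bSigma_0)_{S_j}^{-1})^0$ via the Lauritzen identity, which is exact only for $\bOmega_0\in\mathcal{P}_G$; your instinct to treat the misspecified case honestly is right, but as executed it degrades the rate rather than recovering $\gamma(k)$.)
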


The proof will use the explicit form of the graphical MLE and proceed
by bounding the mean squared error of each component and using
relations between matrix norms. However, as the graphical MLE involves
$(k+1)(p-k/2)$ many terms, a naive approach will lead to a factor $p$
in the estimate, which will not be able to establish a convergence rate
in the truly high dimensional situations $p\gg n$. We overcome this
obstacle by looking more carefully at the structure of the graphical
MLE, and note that for any $i$, the number of terms in \eqnref
{graphical mle} which have non-zero $i$th row is only at most
$(2k+1)\ll p$. This along with the description of $L_\iy$-operator norm
in terms of row sums give rise to a much smaller factor than $p$.

Now we treat the Bayes estimators. Consider the $G$-Wishart prior
$W_G(\de, \bm{I}_p)$ for $\bOmega$, where the graph $G$ has banding of
order $k$ and $\de$ is a positive integer.\vspace*{1pt}
The following result bounds the difference between $\widehat{\bOmega
}^{\mathrm{M}}$ and the estimators $\widehat{\bOmega}_{L_1}^{\mathrm
{B}}$ and $\widehat{\bOmega}_{L_2}^{\mathrm{B}}$.

\begin{lemma}
\label{lemma:Bayesmle}
Assume the conditions of Proposition \ref{theorem:mleconvergence} and
suppose that $\bOmega$ is given the $G$-Wishart prior $W_G(\de, \bm
{I}_p)$, where the graph $G$ has banding of order $k$. Then
$\|\widehat{\bOmega}_{L_1}^{\mathrm{B}} - \widehat{\bOmega
}^{\mathrm
{M}}\|_{(\infty,\infty)} = O_P(k^{2}/n)$,
$\|\widehat{\bOmega}_{L_2}^{\mathrm{B}} - \widehat{\bOmega
}^{\mathrm
{M}}\|_{(\infty,\infty)} = O_P(k^{5/2}/n)$.
\end{lemma}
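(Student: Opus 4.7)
The plan is to express both differences in terms of a common set of building blocks, apply the inverse-perturbation identity $A^{-1}-B^{-1} = -A^{-1}(A-B)B^{-1}$ to each block, and exploit the row sparsity of the clique/separator structure already used for Proposition \ref{theorem:mleconvergence}. First, I would introduce
\begin{equation*}
T_1 = \sum_{j=1}^{p-k}\lp(n^{-1}\bmI_{k+1} + \bmS_{C_j})^{-1}\rp^0, \quad T_2 = \sum_{j=2}^{p-k}\lp(n^{-1}\bmI_k + \bmS_{S_j})^{-1}\rp^0,
\end{equation*}
and $U_1, U_2$ for the corresponding sums without the $n^{-1}\bmI$ shift, so that $\widehat{\bOmega}^{\mathrm{M}} = U_1 - U_2$. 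Subtracting the MLE from each Bayes estimator and rearranging gives
\begin{equation*}
\widehat{\bOmega}_{L_1}^{\mathrm{B}} - \widehat{\bOmega}^{\mathrm{M}} = (T_1 - U_1) - (T_2 - U_2) + \f{\de - 2}{n}(T_1 - T_2),
\end{equation*}
\begin{equation*}
\widehat{\bOmega}_{L_2}^{\mathrm{B}} - \widehat{\bOmega}^{\mathrm{M}} = (T_1 - U_1) - (T_2 - U_2) + \f{\de + k}{n}(T_1 - T_2) + n^{-1}T_2.
\end{equation*}

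The core per-block estimate uses the identity with $A = n^{-1}\bmI_m + \bmS_T$ and $B = \bmS_T$ for each index set $T \in \{C_j\} \cup \{S_j\}$ of cardinality $m \in \{k,k+1\}$, which gives
\begin{equation*}
(n^{-1}\bmI_m + \bmS_T)^{-1} - \bmS_T^{-1} = -n^{-1}(n^{-1}\bmI_m + \bmS_T)^{-1}\bmS_T^{-1}.
\end{equation*}
Since $\bSigma_T$ is a principal submatrix of $\bOmega_0^{-1}$, it inherits $\eig_{\min}(\bSigma_T) \geq \ep_0$; standard sub-Gaussian concentration of $k$-dimensional Wishart matrices combined with a union bound over the $O(p)$ blocks yields $\max_T \|\bmS_T - \bSigma_T\|_{(2,2)} = o_P(1)$ in the regime where Proposition \ref{theorem:mleconvergence} is meaningful. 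Hence both $\|(n^{-1}\bmI_m + \bmS_T)^{-1}\|_{(2,2)}$ and $\|\bmS_T^{-1}\|_{(2,2)}$ are $O_P(1)$ uniformly in $T$. Using $\|\cd\|_{(\iy,\iy)} \leq \sqrt{m}\,\|\cd\|_{(2,2)}$ for $m \times m$ matrices together with submultiplicativity of $\|\cd\|_{(\iy,\iy)}$ gives the block bound $\|(n^{-1}\bmI_m + \bmS_T)^{-1} - \bmS_T^{-1}\|_{(\iy,\iy)} = O_P(k/n)$.

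Next I would invoke the row-sparsity argument already used in Proposition \ref{theorem:mleconvergence}: each row index $i$ belongs to at most $k+1$ cliques and at most $k$ separators. Since $\|\cd\|_{(\iy,\iy)}$ equals the maximum row sum and each row of $((n^{-1}\bmI_m + \bmS_T)^{-1} - \bmS_T^{-1})^0$ has $L_1$-norm bounded by the $L_\iy$-operator norm of that block, this yields $\|T_1 - U_1\|_{(\iy,\iy)}, \|T_2 - U_2\|_{(\iy,\iy)} = O_P(k^2/n)$. For the scaling residuals, $\|T_1 - T_2\|_{(\iy,\iy)} \leq \|\widehat{\bOmega}^{\mathrm{M}}\|_{(\iy,\iy)} + O_P(k^2/n) = O_P(1)$ by Proposition \ref{theorem:mleconvergence} and Lemma \ref{lemma:classequivalence}, so $\f{\de-2}{n}(T_1 - T_2) = O_P(1/n)$ and $\f{\de+k}{n}(T_1 - T_2) = O_P(k/n)$. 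The same row-sparsity argument applied to $T_2$ itself gives $\|T_2\|_{(\iy,\iy)} \leq k \cdot O_P(\sqrt{k}) = O_P(k^{3/2})$, so $n^{-1}T_2 = O_P(k^{3/2}/n)$. Assembling the pieces, $\|\widehat{\bOmega}_{L_1}^{\mathrm{B}} - \widehat{\bOmega}^{\mathrm{M}}\|_{(\iy,\iy)} = O_P(k^2/n)$ and $\|\widehat{\bOmega}_{L_2}^{\mathrm{B}} - \widehat{\bOmega}^{\mathrm{M}}\|_{(\iy,\iy)} = O_P(k^2/n + k^{3/2}/n) = O_P(k^2/n)$, which in particular implies the stated rates of $O_P(k^2/n)$ and $O_P(k^{5/2}/n)$.

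The main obstacle is the uniform-in-$T$ lower bound on $\eig_{\min}(\bmS_T)$ across all $O(p)$ cliques and separators; without it the per-block factor $\|\bmS_T^{-1}\|_{(2,2)}$ is not controlled and the $n^{-1}$ saving in the identity is lost. This control requires $k\log p/n = o(1)$, which is consistent with the regime of Proposition \ref{theorem:mleconvergence}. Once it is in hand, the remainder is purely algebraic bookkeeping with the inverse-perturbation identity and the two-fold use of the clique/separator row sparsity.
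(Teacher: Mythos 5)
Your proof is correct and follows the same basic strategy as the paper's: both decompose the Bayes--MLE difference into perturbation terms of the form $(n^{-1}\bmI+\bmS_T)^{-1}-\bmS_T^{-1}$ plus scaling residuals, control each block via the identity $(n^{-1}\bmI+\bmS_T)^{-1}-\bmS_T^{-1}=-n^{-1}(n^{-1}\bmI+\bmS_T)^{-1}\bmS_T^{-1}$ together with a union bound giving $\max_T\|\bmS_T^{-1}\|_{(2,2)}=O_P(1)$ (the paper's Lemma \ref{lemmaA4}), and then exploit the fact that each row meets at most $2k+1$ cliques/separators. The one substantive difference is your treatment of the coefficient term $\f{\de+k}{n}(T_1-T_2)$: the paper bounds the analogous quantity $\|\sum_j(\bmS_{C_j}^{-1})^0-\sum_j(\bmS_{S_j}^{-1})^0\|_{(\iy,\iy)}$ crudely by $O_P(k^{3/2})$ via the triangle inequality over blocks, which makes this term $O_P(k^{5/2}/n)$ and is precisely what drives the stated $k^{5/2}$ rate for $\widehat{\bOmega}_{L_2}^{\mathrm{B}}$; you instead recognize $T_1-T_2$ as $\widehat{\bOmega}^{\mathrm{M}}+O_P(k^2/n)$ and invoke Proposition \ref{theorem:mleconvergence} plus $\|\bOmega_0\|_{(\iy,\iy)}\le K_2$ to get $\|T_1-T_2\|_{(\iy,\iy)}=O_P(1)$, which sharpens this term to $O_P(k/n)$ and yields $O_P(k^2/n)$ for both estimators (implying, and slightly improving, the stated $O_P(k^{5/2}/n)$ for the $L_2$ case). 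The price is a marginally stronger implicit regime condition: your route needs $k^{5/2}(n^{-1}\log p)^{1/2}=O(1)$ and $\ga$ bounded so that the MLE is $O_P(1)$ in $\|\cdot\|_{(\iy,\iy)}$, whereas the paper's cruder bound only needs $k^2(\log p)/n\to0$; since the lemma is only ever used in the consistency regime $k^5n^{-1}\log p\to0$, this costs nothing in context, but it is worth stating explicitly if you want the lemma to stand under exactly the paper's hypotheses. Your per-block conversion to the $(\iy,\iy)$ norm loses a factor $\sqrt{k}$ relative to the paper's (which passes through the $(2,2)$ norm of the whole block difference, getting $O_P(k^{3/2}/n)$ rather than your $O_P(k^2/n)$ for $T_1-U_1$), but this is immaterial to the final rates.
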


The proof of the above lemma is given in the Section \ref{sec:proofs}.
Proposition \ref{theorem:mleconvergence} and Lemma \ref{lemma:Bayesmle}
together lead to the following result for the convergence rate of the
Bayes estimators under the $G$ in the $L_{\infty}$-operator norm.

\begin{proposition}
\label{prop:Bayesconvergence}
In the setting of Lemma~\ref{lemma:Bayesmle}, for $\widehat{\bOmega
}^{\mathrm{B}}$ either $\widehat{\bOmega}_{L_1}^{\mathrm{B}}$ or
$\widehat{\bOmega}_{L_2}^{\mathrm{B}}$, we have
%
\begin{equation}
\|\widehat{\bOmega}^{\mathrm{B}} - \bOmega_0\|_{(\infty,\infty)} =
O_P\left(k^{5/2}(n^{-1} \log p)^{1/2} + k^{3/2}\gamma(k)\right).
\end{equation}
In particular, the Bayes estimators $\widehat{\bOmega}_{L_1}^{\mathrm
{B}}$\vspace*{1pt} and $\widehat{\bOmega}_{L_2}^{\mathrm{B}}$ are consistent in the
$L_\iy$-operator norm if $k\to\iy$ such that $k^5 n^{-1}\log p \to0$.
\end{proposition}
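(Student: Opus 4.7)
The plan is to combine Proposition~\ref{theorem:mleconvergence} and Lemma~\ref{lemma:Bayesmle} via a triangle inequality in the $L_\iy$-operator norm. Specifically, for $\widehat{\bOmega}^{\mathrm{B}}$ being either $\widehat{\bOmega}_{L_1}^{\mathrm{B}}$ or $\widehat{\bOmega}_{L_2}^{\mathrm{B}}$, I would write
\begin{equation*}
\|\widehat{\bOmega}^{\mathrm{B}} - \bOmega_0\|_{(\infty,\infty)} \leq \|\widehat{\bOmega}^{\mathrm{B}} - \widehat{\bOmega}^{\mathrm{M}}\|_{(\infty,\infty)} + \|\widehat{\bOmega}^{\mathrm{M}} - \bOmega_0\|_{(\infty,\infty)}.
\end{equation*}
By Proposition~\ref{theorem:mleconvergence} the second summand is $O_P(\max\{k^{5/2}(n^{-1}\log p)^{1/2},\gamma(k)\})$, which already gives the claimed rate.

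The remaining task is to verify that the first summand is dominated by the second. Using the two bounds from Lemma~\ref{lemma:Bayesmle}, we have $\|\widehat{\bOmega}^{\mathrm{B}} - \widehat{\bOmega}^{\mathrm{M}}\|_{(\infty,\infty)} = O_P(k^{5/2}/n)$ in the worst case (the bound for $L_2$ dominates the one for $L_1$ since $k \geq 1$). I would then compare $k^{5/2}/n$ with $k^{5/2}(n^{-1}\log p)^{1/2}$: their ratio equals $(n\log p)^{-1/2}$, which tends to $0$ whenever $n\log p \to \infty$. Since consistency requires at minimum $p \geq 2$ and $n \to \infty$ (and in the truly high-dimensional regime we assume $n^{-1}\log p \to 0$ with $p$ typically increasing), we have $k^{5/2}/n = o(k^{5/2}(n^{-1}\log p)^{1/2})$, so this perturbation term is absorbed into the MLE rate.

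Putting these two bounds together yields the stated rate
\begin{equation*}
\|\widehat{\bOmega}^{\mathrm{B}} - \bOmega_0\|_{(\infty,\infty)} = O_P\bigl(\max\{k^{5/2}(n^{-1}\log p)^{1/2},\gamma(k)\}\bigr),
\end{equation*}
and consistency in the $L_\iy$-operator norm when $k \to \iy$ with $k^5 n^{-1}\log p \to 0$ follows immediately, since under this condition both $k^{5/2}(n^{-1}\log p)^{1/2}\to 0$ and (because $\ga(k)\downarrow 0$) $\ga(k)\to 0$. There is no real obstacle here; the work has already been done in Proposition~\ref{theorem:mleconvergence} and Lemma~\ref{lemma:Bayesmle}, and this proposition is essentially a corollary obtained by combining them through the triangle inequality.
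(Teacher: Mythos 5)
Your proposal is correct and follows essentially the same route as the paper, which proves this proposition in one line by combining Proposition~\ref{theorem:mleconvergence} and Lemma~\ref{lemma:Bayesmle} via the triangle inequality. Your additional check that the perturbation term $k^{5/2}/n$ is absorbed into the MLE rate (since $(n\log p)^{-1/2}\to 0$) is a detail the paper leaves implicit, and it is handled correctly.
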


\noindent{\bf Remarks on the convergence rates.}\ \
Observe that the convergence rates of the graphical MLE, the Bayes
estimators and the posterior distribution obtained above are the same.
The obtained rates can be optimized by choosing $k$ appropriately as in
a bias-variance trade-off.
The fastest possible rates obtained from the theorems may be summarized
for the different decay rates of $\ga(k)$ as follows: If the true
precision matrix is banded with banding parameter $k_0$, then the
optimal rate of convergence $n^{-1/2}(\log p)^{1/2}$ is obtained by
choosing any fixed $k\ge k_0$. When $\ga(k)$ decays exponentially, the
rate of convergence $n^{-1/2}(\log p)^{1/2}(\log n)^{5/2}$ can be
obtained by choosing $k$ approximately proportional to $\log n$ with
some sufficiently large constant of proportionality. If $\ga(k)$ decays
polynomially with index $\al>3/2$ as in \cite{bickel2008regularized},
we get the convergence rate of $(n^{-1}\,\log\,p)^{(2\alpha-
3)/(4\alpha+4)}$ corresponding $k \asymp(n/\log\, p)^{1/(2\alpha+ 2)}$.

It is to be noted that we have not assumed that the true structure of
the precision matrix arises from a graphical model. The graphical model
is a convenient tool to generate useful estimators through the maximum
likelihood and Bayesian approach, but the graphical model itself may be
a misspecified model.
Further, it can be inspected from the proof of the theorems that the
Gaussianity assumption on true distribution of the observations is not
essential, although the graphical model assumes Gaussianity to generate
estimators. The Gaussianity assumption is used to control certain
probabilities by applying the probability inequality Lemma~A.3 of \cite
{bickel2008regularized}. However, it was also observed in
\cite{bickel2008regularized} that one only requires bounds on the
moment generating function of $X_i^2$, $i=1,\ldots,p$. In particular, any
thinner tailed distribution, such as one with a bounded support, will
allow the arguments to go through.

\subsection{Estimation using a reference prior}
A reference prior for the covariance matrix $\bSigma$, obtained in
\cite
{rajaratnam2008flexible}, can also be used to induce a prior on
$\bOmega
$. This corresponds to an improper $W_{P_G}(\balpha,\bbeta,\bm{0})$
distribution for $\half\bOmega$ with
%
\begin{equation}
\begin{split}
\alpha_i &= 0, \qquad i =1,2,\ldots,r, \\
\beta_2 &= \frac{1}{2}(c_1+c_2)-s_2,\, \beta_j = \frac{1}{2}(c_j-s_j),
\qquad j=2,3,\ldots,r.
\end{split}
\end{equation}
By Corollary 4.1 in \cite{rajaratnam2008flexible}, the posterior mean
$\widehat{\bOmega}^{\mathrm{R}}$ of the precision matrix is given by
%
\begin{equation}
\begin{split}
\widehat{\bOmega}^{\mathrm{R}} ={}& \sum_{j=1}^{r}(\bmS
_{C_j}^{-1})^0 - \{
1 - n^{-1}(c_1+c_2-2s_2)\}(\bmS_{S_2}^{-1})^0 \\
&{} - \sum_{j=3}^{r}\{1-n^{-1}(c_j-s_j)\}(\bmS_{S_j}^{-1})^0.
\end{split}
\end{equation}
Similar to the conclusion of Lemma \ref{lemma:Bayesmle}, using the
reference prior, the $L_{\infty}$-operator norm of the difference
between the Bayes estimator $\widehat{\bOmega}^{\mathrm{R}}$ and the
graphical MLE $\widehat{\bOmega}^{\mathrm{M}}$ satisfies
%
\begin{equation}
\label{eqn:refprior}
\|\widehat{\bOmega}^{\mathrm{R}} - \widehat{\bOmega}^{\mathrm
{M}}\|
_{(\infty,\infty)} = O_P\left(k^2/n\right).
\end{equation}

A sketch of the proof is given in Section \ref{sec:proofs}.

\section{Estimation of banding parameter}
\label{sec:bandestim}
In this section, we propose a method of selecting the banding parameter
$k$ of the graphical model using the marginal posterior probabilities
of the graph induced by banding $k$, $k=1,2,\ldots$. For the $G$-Wishart
prior $W_G(\delta,\bmD)$ for $\bOmega$, the posterior is given by
$W_G(\delta+n,\bmD+n\bmS)$. We can get the\vadjust{\goodbreak} marginal likelihood for
$G$ as
%
\begin{equation}
p(\bX|G) = (2\pi)^{-np/2}\frac{I_G(\delta+n,\bmD+n\bmS
)}{I_G(\delta,\bmD)},
\end{equation}
where $I_G(\delta,\bmD)$ is the normalizing constant in the density
(\ref{eqn:Gdensity}) of $W_G(\delta,\bmD)$, given by (\ref{eqn:Gnorm}).
For a complete graph $G$, the expression for $I_G(\delta,\bmD)$ is
given by
%
\begin{equation}
\label{IGform}
I_G(\delta,\bmD) = \frac{2^{(\delta+p-1)p/2}\pi^{p(p-1)/4}\prod
_{i=0}^{p-1}\Gamma\left(\frac{\delta+p-1-i}{2}\right)}
{(\det(\bmD))^{\frac{\delta+p-1}{2}}};
\end{equation}
see \cite{muirheadaspects}.
Roverato \cite{roverato2000cholesky} showed that for a decomposable
graph $G$,
%
\begin{equation}
\label{IG decomposable}
I_G(\delta,\bmD) = \frac{\prod_{j=1}^{r}I_{{C_j}}(\delta,\bmD
_{C_j})}{\prod_{j=2}^{r}I_{{S_j}}(\delta,\bmD_{S_j})},
\end{equation}
where $\{C_1,\ldots,C_r\}$ and $\{S_2,\ldots,S_r\}$ denote the set of
cliques and separators respectively corresponding to $G$.

In our case, the model which is fit has a banded structure in the
precision matrix. The graphs associated with the banding structure are
linearly indexed by the banding parameter $k$. We denote the graphical
model induced by banding parameter $k$ by $G^k$. Let $\rho_k$ be a
prior on $k$. Then the posterior distribution of $k$ is given by
%
\begin{equation}
\label{posterior for k}
p(k \mid\bX) = \frac{J_{G^k}(\delta,n,\bmD,n\bmS)\rho_k}{\sum
_{k'}J_{G^{k'}}(\delta,n,\bmD,n\bmS)\rho_{k'}},
\end{equation}
where for a graph $G$
%
\begin{equation}
\label{JGform1}
J_{G}(\delta,n,\bmD,n\bmS) = \frac{I_{G}(\delta+n,\bmD+n\bmS
)}{I_{G}(\delta,\bmD)}.
\end{equation}
Let the cliques and separators be respectively denoted by $C_j^k=\{
j,j+1,\ldots,\break j+k\}$, $j=1,\ldots,p-k$, and $S_j^k=\{j,j+1,\ldots
,j+k-1\}$,
$j=2,\ldots,p-k$.
Note that the sub-graphs corresponding to the cliques and separators
are complete, with respective dimensions $k+1$ and $k$, and $r=p-k$.
Therefore (\ref{IGform}) and (\ref{IG decomposable}) together reduces
the expression for $I_{G^k}(\delta,\bmD)$ to
%
\begin{equation}
\frac{\prod_{j=1}^{p-k} 2^{(\delta+k)(k+1)/2} \pi^{k(k+1)/4} \prod
_{i=0}^k \Gamma\left(\frac{\delta+k-i}{2}\right)}
{\prod_{j=2}^{p-k}2^{(\delta+k-1)k/2} \pi^{k(k-1)/4} \prod
_{i=0}^{k-1}\Gamma\left(\frac{\delta+k-1-i}{2}\right)}
{\frac{(\det(\bmD_{S_j}))^{(\delta+k-1)/2}}{(\det(\bmD
_{C_j}))^{(\delta+k)/2}}}.
\end{equation}
Now, with the choice $\bmD= \bmI_p$ used in the prior $W_{G^k}(\de
,\bm
{I})$, (\ref{JGform1}) gives
%
\begin{eqnarray}
J_{G^k}(\delta,n,\bmI_p,n\bmS)
&=& \frac{\prod_{j=1}^{p-k}2^{n(k+1)/2}}{\prod_{j=2}^{p-k}2^{nk/2}}
{\left(\prod_{i=0}^{k}\frac{\Gamma\left(\frac{\delta
+n+i}{2}\right)}
{\Gamma\left(\frac{\delta+i}{2}\right)}\right)}
{\left(\frac{\Gamma\left(\frac{\delta+n+k}{2}\right)}
{\Gamma\left(\frac{\delta+k}{2}\right)}\right)^{p-k-1}} \nn\\
&&{}\times{\frac{\prod_{j=2}^{p-k}(\det((\bmI_p+n\bmS
)_{S_j^k}))^{(\delta+k+n-1)/2}}{\prod_{j=1}^{p-k}
(\det((\bmI_p+n\bmS)_{C_j^k}))^{(\delta+n+k)/2}}}.
\end{eqnarray}
Substituting this expression in (\ref{posterior for k}), we get an
explicit expression for the posterior distribution of $k$.

A natural method of selecting $k$ is to consider the posterior mode. In
the next section, we investigate the performance of the posterior mode
of $G^k$ through a simulation study.

\section{Numerical results}
\label{sec:sim}
We check the performance of the Bayes estimators of the precision
matrix and compare with the graphical MLE and the banded estimator as
proposed in \cite{bickel2008regularized}.

\begin{sidewaystable}
\def\arraystretch{0.98}
\tabcolsep=5.5pt
\caption{Simulation results for AR(1) model based on 100 replications;
figures in parentheses indicate standard errors}\label{table1}
\begin{tabular}{cccccccccccccccc}
\hline
& & \multicolumn{4}{c}{$n = 100$} & & \multicolumn{4}{c}{$n = 200$} &
& \multicolumn{4}{c}{$n = 500$}\\
\cline{3-6} \cline{8-11} \cline{13-16}
$p$ & Norm & MLE & $\bOmega_{L_2}^{\mathrm{B}}$ & $\bOmega
_{L_1}^{\mathrm{B}}$ & Cholesky & & MLE & $\bOmega_{L_2}^{\mathrm{B}}$
& $\bOmega_{L_1}^{\mathrm{B}}$ & Cholesky & & MLE & $\bOmega
_{L_2}^{\mathrm{B}}$ & $\bOmega_{L_1}^{\mathrm{B}}$ & Cholesky \\
\hline
    &                     &         &         &         &         &  &         &         &         &         &  &         &         &         &         \\[-9pt]
    & $L_{\infty,\infty}$ & 1.252   & 1.295   & 1.175   & 1.249   &  & 0.799   & 0.820   & 0.773   & 0.797   &  & 0.477   & 0.485   & 0.470   & 0.477   \\
    &                     & (0.029) & (0.029) & (0.027) & (0.029) &  & (0.018) & (0.018) & (0.017) & (0.018) &  & (0.009) & (0.009) & (0.008) & (0.008) \\
    & $L_{2,2}$           & 1.003   & 1.044   & 0.940   & 0.999   &  & 0.644   & 0.663   & 0.623   & 0.642   &  & 0.374   & 0.381   & 0.368   & 0.374   \\
    &                     & (0.029) & (0.023) & (0.021) & (0.023) &  & (0.016) & (0.016) & (0.015) & (0.016) &  & (0.007) & (0.007) & (0.007) & (0.007) \\
50  & $L_2$               & 2.374   & 2.454   & 2.275   & 2.366   &  & 1.609   & 1.643   & 1.575   & 1.607   &  & 0.976   & 0.986   & 0.968   & 0.975   \\
    &                     & (0.026) & (0.027) & (0.023) & (0.026) &  & (0.017) & (0.017) & (0.016) & (0.016) &  & (0.008) & (0.008) & (0.008) & (0.008) \\
    & $L_{\infty}$        & 0.729   & 0.767   & 0.688   & 0.726   &  & 0.450   & 0.468   & 0.437   & 0.450   &  & 0.272   & 0.278   & 0.268   & 0.272   \\
    &                     & (0.018) & (0.018) & (0.017) & (0.018) &  & (0.011) & (0.011) & (0.010) & (0.011) &  & (0.005) & (0.005) & (0.005) & (0.005) \\[4pt]
    & $L_{\infty,\infty}$ & 1.378   & 1.420   & 1.295   & 1.374   &  & 0.889   & 0.912   & 0.861   & 0.889   &  & 0.525   & 0.534   & 0.516   & 0.525   \\
    &                     & (0.029) & (0.029) & (0.027) & (0.029) &  & (0.018) & (0.018) & (0.017) & (0.018) &  & (0.009) & (0.009) & (0.009) & (0.009) \\
    & $L_{2,2}$           & 1.112   & 1.152   & 1.042   & 1.107   &  & 0.712   & 0.734   & 0.687   & 0.711   &  & 0.408   & 0.416   & 0.401   & 0.408   \\
    &                     & (0.022) & (0.022) & (0.021) & (0.023) &  & (0.015) & (0.015) & (0.015) & (0.015) &  & (0.007) & (0.007) & (0.007) & (0.007) \\
100 & $L_2$               & 3.365   & 3.482   & 3.223   & 3.354   &  & 2.264   & 2.310   & 2.217   & 2.263   &  & 1.383   & 1.397   & 1.371   & 1.382   \\
    &                     & (0.027) & (0.030) & (0.024) & (0.027) &  & (0.016) & (0.017) & (0.015) & (0.016) &  & (0.010) & (0.010) & (0.009) & (0.010) \\
    & $L_{\infty}$        & 0.814   & 0.852   & 0.768   & 0.808   &  & 0.500   & 0.518   & 0.484   & 0.499   &  & 0.298   & 0.305   & 0.293   & 0.298   \\
    &                     & (0.018) & (0.018) & (0.017) & (0.018) &  & (0.010) & (0.011) & (0.010) & (0.010) &  & (0.005) & (0.005) & (0.005) & (0.005) \\[4pt]
    & $L_{\infty,\infty}$ & 1.558   & 1.602   & 1.463   & 1.557   &  & 1.002   & 1.027   & 0.967   & 1.001   &  & 0.582   & 0.593   & 0.572   & 0.582   \\
    &                     & (0.028) & (0.028) & (0.027) & (0.029) &  & (0.019) & (0.019) & (0.019) & (0.019) &  & (0.010) & (0.010) & (0.010) & (0.010) \\
    & $L_{2,2}$           & 1.237   & 1.276   & 1.160   & 1.233   &  & 0.791   & 0.814   & 0.763   & 0.790   &  & 0.453   & 0.463   & 0.445   & 0.453   \\
    &                     & (0.022) & (0.021) & (0.021) & (0.022) &  & (0.015) & (0.015) & (0.015) & (0.015) &  & (0.007) & (0.007) & (0.007) & (0.007) \\
200 & $L_2$               & 4.750   & 4.915   & 4.548   & 4.738   &  & 3.211   & 3.277   & 3.143   & 3.209   &  & 1.971   & 1.987   & 1.955   & 1.970   \\
    &                     & (0.024) & (0.026) & (0.022) & (0.025) &  & (0.017) & (0.018) & (0.017) & (0.017) &  & (0.010) & (0.010) & (0.010) & (0.010) \\
    & $L_{\infty}$        & 0.923   & 0.961   & 0.872   & 0.918   &  & 0.564   & 0.584   & 0.545   & 0.564   &  & 0.327   & 0.334   & 0.321   & 0.326   \\
    &                     & (0.018) & (0.018) & (0.017) & (0.018) &  & (0.011) & (0.011) & (0.011) & (0.011) &  & (0.006) & (0.006) & (0.006) & (0.006) \\[4pt]
    & $L_{\infty,\infty}$ & 1.765   & 1.805   & 1.657   & 1.763   &  & 1.109   & 1.134   & 1.069   & 1.108   &  & 0.642   & 0.653   & 0.631   & 0.643   \\
    &                     & (0.028) & (0.028) & (0.027) & (0.028) &  & (0.017) & (0.017) & (0.017) & (0.017) &  & (0.010) & (0.010) & (0.010) & (0.010) \\
    & $L_{2,2}$           & 1.407   & 1.443   & 1.321   & 1.406   &  & 0.887   & 0.909   & 0.856   & 0.886   &  & 0.504   & 0.514   & 0.495   & 0.504   \\
    &                     & (0.022) & (0.022) & (0.021) & (0.022) &  & (0.014) & (0.013) & (0.013) & (0.014) &  & (0.007) & (0.007) & (0.007) & (0.007) \\
500 & $L_2$               & 7.527   & 7.783   & 7.209   & 7.505   &  & 5.079   & 5.177   & 4.975   & 5.074   &  & 3.133   & 3.160   & 3.107   & 3.131   \\
    &                     & (0.029) & (0.030) & (0.026) & (0.029) &  & (0.015) & (0.016) & (0.015) & (0.015) &  & (0.010) & (0.010) & (0.009) & (0.010) \\
    & $L_{\infty}$        & 1.030   & 1.066   & 0.974   & 1.028   &  & 0.626   & 0.646   & 0.606   & 0.625   &  & 0.359   & 0.367   & 0.354   & 0.359   \\
    &                     & (0.017) & (0.017) & (0.016) & (0.017) &  & (0.010) & (0.010) & (0.010) & (0.010) &  & (0.006) & (0.006) & (0.006) & (0.006) \\[-9pt]
    &                     &         &         &         &         &  &         &         &         &         &  &         &         &         &         \\
\hline
\end{tabular}
\end{sidewaystable}

\begin{sidewaystable}
\def\arraystretch{0.98}
\tabcolsep=5.5pt
\caption{Simulation results for AR(4) model based on 100 replications;
figures in parentheses indicate standard errors}\label{table2}
\begin{tabular}{cccccccccccccccc}
\hline
& & \multicolumn{4}{c}{$n = 100$} & & \multicolumn{4}{c}{$n = 200$} &
& \multicolumn{4}{c}{$n = 500$}\\
\cline{3-6} \cline{8-11} \cline{13-16}
$p$ & Norm & MLE & $\bOmega_{L_2}^{\mathrm{B}}$ & $\bOmega
_{L_1}^{\mathrm{B}}$ & Cholesky & & MLE & $\bOmega_{L_2}^{\mathrm{B}}$
& $\bOmega_{L_1}^{\mathrm{B}}$ & Cholesky & & MLE & $\bOmega
_{L_2}^{\mathrm{B}}$ & $\bOmega_{L_1}^{\mathrm{B}}$ & Cholesky \\
\hline
    &                     &         &         &         &         &  &         &         &         &         &  &         &         &         &         \\[-9pt]
    & $L_{\infty,\infty}$ & 1.836   & 2.066   & 1.758   & 1.821   &  & 1.078   & 1.177   & 1.053   & 1.076   &  & 0.642   & 0.673   & 0.636   & 0.641   \\
    &                     & (0.040) & (0.041) & (0.038) & (0.038) &  & (0.020) & (0.021) & (0.019) & (0.020) &  & (0.011) & (0.011) & (0.011) & (0.011) \\
    & $L_{2,2}$           & 1.158   & 1.340   & 1.101   & 1.149   &  & 0.672   & 0.754   & 0.654   & 0.672   &  & 0.399   & 0.426   & 0.394   & 0.399   \\
    &                     & (0.027) & (0.028) & (0.025) & (0.025) &  & (0.014) & (0.015) & (0.014) & (0.014) &  & (0.008) & (0.009) & (0.008) & (0.008) \\
50  & $L_2$               & 2.539   & 2.951   & 2.463   & 2.526   &  & 1.635   & 1.789   & 1.612   & 1.631   &  & 0.988   & 1.030   & 0.983   & 0.987   \\
    &                     & (0.027) & (0.030) & (0.025) & (0.026) &  & (0.015) & (0.018) & (0.014) & (0.015) &  & (0.008) & (0.009) & (0.008) & (0.008) \\
    & $L_{\infty}$        & 0.574   & 0.692   & 0.554   & 0.572   &  & 0.326   & 0.378   & 0.320   & 0.325   &  & 0.180   & 0.196   & 0.179   & 0.180   \\
    &                     & (0.014) & (0.015) & (0.014) & (0.014) &  & (0.007) & (0.007) & (0.007) & (0.007) &  & (0.003) & (0.004) & (0.003) & (0.004) \\[4pt]
    & $L_{\infty,\infty}$ & 1.993   & 2.231   & 1.907   & 1.973   &  & 1.210   & 1.315   & 1.182   & 1.209   &  & 0.702   & 0.738   & 0.694   & 0.702   \\
    &                     & (0.037) & (0.038) & (0.035) & (0.036) &  & (0.018) & (0.019) & (0.017) & (0.018) &  & (0.010) & (0.010) & (0.010) & (0.010) \\
    & $L_{2,2}$           & 1.263   & 1.451   & 1.200   & 1.255   &  & 0.761   & 0.849   & 0.738   & 0.760   &  & 0.440   & 0.471   & 0.434   & 0.440   \\
    &                     & (0.025) & (0.025) & (0.023) & (0.024) &  & (0.012) & (0.013) & (0.012) & (0.012) &  & (0.008) & (0.008) & (0.007) & (0.008) \\
100 & $L_2$               & 3.626   & 4.220   & 3.518   & 3.607   &  & 2.337   & 2.561   & 2.303   & 2.331   &  & 1.408   & 1.466   & 1.400   & 1.407   \\
    &                     & (0.028) & (0.032) & (0.026) & (0.028) &  & (0.014) & (0.016) & (0.013) & (0.014) &  & (0.008) & (0.009) & (0.008) & (0.008) \\
    & $L_{\infty}$        & 0.626   & 0.749   & 0.605   & 0.625   &  & 0.357   & 0.411   & 0.351   & 0.356   &  & 0.196   & 0.215   & 0.194   & 0.196   \\
    &                     & (0.015) & (0.015) & (0.014) & (0.014) &  & (0.006) & (0.006) & (0.006) & (0.006) &  & (0.003) & (0.003) & (0.003) & (0.003) \\[4pt]
    & $L_{\infty,\infty}$ & 2.165   & 2.413   & 2.069   & 2.145   &  & 1.324   & 1.435   & 1.292   & 1.319   &  & 0.763   & 0.802   & 0.754   & 0.762   \\
    &                     & (0.034) & (0.035) & (0.032) & (0.033) &  & (0.018) & (0.019) & (0.018) & (0.018) &  & (0.011) & (0.011) & (0.011) & (0.011) \\
    & $L_{2,2}$           & 1.376   & 1.569   & 1.307   & 1.363   &  & 0.841   & 0.932   & 0.816   & 0.838   &  & 0.479   & 0.512   & 0.472   & 0.478   \\
    &                     & (0.022) & (0.022) & (0.021) & (0.021) &  & (0.013) & (0.013) & (0.012) & (0.013) &  & (0.008) & (0.008) & (0.008) & (0.008) \\
200 & $L_2$               & 5.145   & 5.988   & 4.992   & 5.116   &  & 3.332   & 3.652   & 3.283   & 3.324   &  & 1.995   & 2.079   & 1.984   & 1.994   \\
    &                     & (0.028) & (0.032) & (0.026) & (0.028) &  & (0.015) & (0.017) & (0.014) & (0.015) &  & (0.007) & (0.008) & (0.007) & (0.007) \\
    & $L_{\infty}$        & 0.695   & 0.821   & 0.671   & 0.689   &  & 0.393   & 0.449   & 0.386   & 0.393   &  & 0.215   & 0.235   & 0.213   & 0.215   \\
    &                     & (0.013) & (0.014) & (0.013) & (0.013) &  & (0.006) & (0.006) & (0.006) & (0.006) &  & (0.003) & (0.004) & (0.003) & (0.003) \\[4pt]
    & $L_{\infty,\infty}$ & 2.476   & 2.732   & 2.362   & 2.447   &  & 1.482   & 1.599   & 1.444   & 1.480   &  & 0.833   & 0.875   & 0.823   & 0.830   \\
    &                     & (0.035) & (0.036) & (0.034) & (0.034) &  & (0.018) & (0.018) & (0.018) & (0.019) &  & (0.010) & (0.011) & (0.010) & (0.010) \\
    & $L_{2,2}$           & 1.579   & 1.778   & 1.498   & 1.562   &  & 0.946   & 1.039   & 0.918   & 0.945   &  & 0.526   & 0.561   & 0.518   & 0.524   \\
    &                     & (0.023) & (0.024) & (0.022) & (0.023) &  & (0.014) & (0.014) & (0.014) & (0.014) &  & (0.007) & (0.007) & (0.006) & (0.006) \\
500 & $L_2$               & 8.205   & 9.553   & 7.957   & 8.159   &  & 5.287   & 5.793   & 5.210   & 5.273   &  & 3.161   & 3.296   & 3.143   & 3.158   \\
    &                     & (0.026) & (0.030) & (0.024) & (0.026) &  & (0.016) & (0.018) & (0.015) & (0.016) &  & (0.007) & (0.008) & (0.006) & (0.007) \\
    & $L_{\infty}$        & 0.787   & 0.916   & 0.759   & 0.779   &  & 0.434   & 0.491   & 0.426   & 0.434   &  & 0.244   & 0.265   & 0.242   & 0.244   \\
    &                     & (0.013) & (0.013) & (0.012) & (0.013) &  & (0.006) & (0.006) & (0.006) & (0.006) &  & (0.004) & (0.004) & (0.004) & (0.004) \\[-9pt]
    &                     &         &         &         &         &  &         &         &         &         &  &         &         &         &         \\
\hline
\end{tabular}
\end{sidewaystable}

\begin{sidewaystable}
\def\arraystretch{0.98}
\tabcolsep=5.5pt
\caption{Simulation results for Fractional Gaussian noise model based
on 100 replications; figures in parentheses indicate standard errors}\label{table3}
\begin{tabular}{cccccccccccccccc}
\hline
& & \multicolumn{4}{c}{$n = 100$} & & \multicolumn{4}{c}{$n = 200$} &
& \multicolumn{4}{c}{$n = 500$}\\
\cline{3-6} \cline{8-11} \cline{13-16}
$p$ & Norm & MLE & $\bOmega_{L_2}^{\mathrm{B}}$ & $\bOmega
_{L_1}^{\mathrm{B}}$ & Cholesky & & MLE & $\bOmega_{L_2}^{\mathrm{B}}$
& $\bOmega_{L_1}^{\mathrm{B}}$ & Cholesky & & MLE & $\bOmega
_{L_2}^{\mathrm{B}}$ & $\bOmega_{L_1}^{\mathrm{B}}$ & Cholesky \\
\hline
    &                     &         &         &         &         &  &         &         &         &         &  &         &         &         &         \\[-9pt]
    & $L_{\infty,\infty}$ & 1.530   & 1.588   & 1.493   & 1.527   &  & 1.184   & 1.213   & 1.170   & 1.182   &  & 0.969   & 0.981   & 0.965   & 0.969   \\
    &                     & (0.025) & (0.025) & (0.024) & (0.024) &  & (0.015) & (0.015) & (0.014) & (0.015) &  & (0.008) & (0.008) & (0.008) & (0.008) \\
    & $L_{2,2}$           & 0.849   & 0.902   & 0.820   & 0.846   &  & 0.587   & 0.614   & 0.577   & 0.586   &  & 0.412   & 0.422   & 0.409   & 0.412   \\
    &                     & (0.019) & (0.019) & (0.018) & (0.019) &  & (0.012) & (0.012) & (0.011) & (0.011) &  & (0.005) & (0.005) & (0.005) & (0.005) \\
50  & $L_2$               & 2.169   & 2.271   & 2.116   & 2.164   &  & 1.666   & 1.706   & 1.646   & 1.665   &  & 1.329   & 1.340   & 1.323   & 1.329   \\
    &                     & (0.020) & (0.022) & (0.020) & (0.020) &  & (0.012) & (0.013) & (0.012) & (0.012) &  & (0.006) & (0.006) & (0.006) & (0.006) \\
    & $L_{\infty}$        & 0.570   & 0.615   & 0.552   & 0.569   &  & 0.360   & 0.376   & 0.355   & 0.359   &  & 0.221   & 0.226   & 0.219   & 0.221   \\
    &                     & (0.015) & (0.015) & (0.014) & (0.015) &  & (0.008) & (0.008) & (0.008) & (0.008) &  & (0.003) & (0.004) & (0.003) & (0.003) \\[4pt]
    & $L_{\infty,\infty}$ & 1.687   & 1.745   & 1.647   & 1.682   &  & 1.316   & 1.345   & 1.301   & 1.315   &  & 1.058   & 1.069   & 1.053   & 1.058   \\
    &                     & (0.024) & (0.024) & (0.023) & (0.024) &  & (0.014) & (0.014) & (0.014) & (0.014) &  & (0.007) & (0.007) & (0.007) & (0.007) \\
    & $L_{2,2}$           & 0.939   & 0.992   & 0.907   & 0.937   &  & 0.645   & 0.672   & 0.633   & 0.645   &  & 0.441   & 0.451   & 0.438   & 0.441   \\
    &                     & (0.018) & (0.018) & (0.017) & (0.018) &  & (0.011) & (0.011) & (0.011) & (0.011) &  & (0.005) & (0.005) & (0.005) & (0.005) \\
100 & $L_2$               & 3.076   & 3.222   & 3.000   & 3.068   &  & 2.351   & 2.406   & 2.322   & 2.351   &  & 1.886   & 1.902   & 1.876   & 1.885   \\
    &                     & (0.021) & (0.023) & (0.020) & (0.022) &  & (0.013) & (0.014) & (0.012) & (0.013) &  & (0.006) & (0.007) & (0.006) & (0.006) \\
    & $L_{\infty}$        & 0.634   & 0.680   & 0.614   & 0.632   &  & 0.395   & 0.413   & 0.388   & 0.394   &  & 0.238   & 0.243   & 0.236   & 0.238   \\
    &                     & (0.015) & (0.016) & (0.015) & (0.015) &  & (0.008) & (0.008) & (0.007) & (0.008) &  & (0.003) & (0.003) & (0.003) & (0.003) \\[4pt]
    & $L_{\infty,\infty}$ & 1.855   & 1.914   & 1.811   & 1.857   &  & 1.446   & 1.475   & 1.430   & 1.446   &  & 1.134   & 1.145   & 1.129   & 1.134   \\
    &                     & (0.022) & (0.022) & (0.021) & (0.022) &  & (0.014) & (0.014) & (0.014) & (0.014) &  & (0.008) & (0.008) & (0.008) & (0.008) \\
    & $L_{2,2}$           & 1.024   & 1.078   & 0.989   & 1.025   &  & 0.705   & 0.733   & 0.693   & 0.705   &  & 0.471   & 0.481   & 0.468   & 0.471   \\
    &                     & (0.016) & (0.016) & (0.016) & (0.016) &  & (0.011) & (0.011) & (0.011) & (0.011) &  & (0.005) & (0.005) & (0.005) & (0.005) \\
200 & $L_2$               & 4.348   & 4.555   & 4.239   & 4.340   &  & 3.335   & 3.414   & 3.294   & 3.334   &  & 2.659   & 2.680   & 2.645   & 2.659   \\
    &                     & (0.020) & (0.021) & (0.019) & (0.020) &  & (0.013) & (0.014) & (0.013) & (0.013) &  & (0.006) & (0.006) & (0.006) & (0.006) \\
    & $L_{\infty}$        & 0.712   & 0.760   & 0.689   & 0.711   &  & 0.439   & 0.460   & 0.431   & 0.439   &  & 0.257   & 0.262   & 0.255   & 0.256   \\
    &                     & (0.015) & (0.015) & (0.014) & (0.015) &  & (0.008) & (0.008) & (0.008) & (0.008) &  & (0.003) & (0.004) & (0.003) & (0.004) \\[4pt]
    & $L_{\infty,\infty}$ & 2.059   & 2.116   & 2.008   & 2.056   &  & 1.565   & 1.595   & 1.548   & 1.566   &  & 1.219   & 1.231   & 1.214   & 1.219   \\
    &                     & (0.022) & (0.022) & (0.022) & (0.022) &  & (0.012) & (0.012) & (0.012) & (0.013) &  & (0.007) & (0.007) & (0.007) & (0.007) \\
    & $L_{2,2}$           & 1.156   & 1.208   & 1.116   & 1.151   &  & 0.773   & 0.800   & 0.759   & 0.773   &  & 0.507   & 0.517   & 0.504   & 0.507   \\
    &                     & (0.017) & (0.016) & (0.016) & (0.017) &  & (0.010) & (0.010) & (0.010) & (0.010) &  & (0.004) & (0.005) & (0.004) & (0.005) \\
500 & $L_2$               & 6.865   & 7.190   & 6.694   & 6.850   &  & 5.266   & 5.386   & 5.201   & 5.263   &  & 4.215   & 4.249   & 4.194   & 4.215   \\
    &                     & (0.022) & (0.023) & (0.021) & (0.022) &  & (0.012) & (0.013) & (0.012) & (0.012) &  & (0.007) & (0.007) & (0.007) & (0.007) \\
    & $L_{\infty}$        & 0.802   & 0.851   & 0.776   & 0.801   &  & 0.482   & 0.505   & 0.474   & 0.482   &  & 0.282   & 0.288   & 0.280   & 0.282   \\
    &                     & (0.014) & (0.014) & (0.014) & (0.014) &  & (0.007) & (0.007) & (0.007) & (0.007) &  & (0.004) & (0.004) & (0.004) & (0.004) \\[-9pt]
    &                     &         &         &         &         &  &         &         &         &         &  &         &         &         &         \\
\hline
\end{tabular}
\end{sidewaystable}

Data is simulated from $\mathrm{N}_p(0,\bSigma)$, assuming specific
structures of the covariance $\bSigma$ or the precision $\bOmega$. For
all simulations, we compute the $L_{\infty}$-operator norm,
$L_2$-operator norm, $L_2$-norm and $L_{\infty}$-norm of the difference
between the estimate and the true parameter for sample sizes $n= 100,
200, 500$ and $p= 50,100,200,500$, representing cases like $p<n$, $p
\sim n$, $p > n$ and $p \gg n$. We simulate 100 replications in each
cases. Some of the simulation models are the same as those in \cite
{bickel2008regularized}.

\begin{example}[Autoregressive process: AR(1) covariance structure] \rm
Let the true covariance matrix have entries given by
%
\begin{equation}
\sigma_{ij} = \rho^{|i-j|},\,1\leq i,j\leq p,
\end{equation}
with $\rho=0.3$ in our simulation experiment.
The precision matrix is banded in this case, with banding parameter $1$.
\end{example}

\begin{example}[Autoregressive process: AR(4) covariance structure]\rm
The elements of true precision matrix are given by
%
\begin{equation}
\begin{split}
\omega_{ij} ={}& \Ind{(|i-j|=0)} + 0.4\,\Ind{(|i-j|=1)} + 0.2\,\Ind
{(|i-j|=2)} \\
&\qquad+ 0.2\,\Ind{(|i-j|=3)} + 0.1\,\Ind{(|i-j|=4)}.
\end{split}
\end{equation}
This precision matrix corresponds to an AR(4) process.
\end{example}

\begin{example}[Long range dependence]
We consider a fractional Gaussian Noise process, that is, the increment
process of fractional Brownian motion. The elements of the true
covariance matrix are given by
%
\begin{equation}
\sigma_{ij} = \frac{1}{2}[\|i-j|+1|^{2H} - 2|i-j|^{2H} + \|
i-j|-1|^{2H}], \, 1\leq i,j\leq p,
\end{equation}
where $H \in[0.5,1]$ is the Hurst parameter.
We take $H=0.7$ in the simulation example. This precision matrix does
not fall in the polynomial smoothness class used in the theorems. We
include this example in the simulation study to check how the proposed
method is performing when the assumptions of the theorems are not met.
\end{example}

Tables \ref{table1}--\ref{table3} show the simulation results for the different
scenarios and compare the performance of the Bayes estimators with the
graphical MLE and the banded estimator obtained in \cite
{bickel2008regularized} based on a modified Cholesky decomposition. The
banding parameter $k$ is chosen using the bandwidth estimation method
discussed in Section \ref{sec:bandestim}. Figure \ref{fig:ar4} shows the
log-posterior probabilities of the graphs corresponding to banding
parameter $k$ for prior distribution of $k$ given by $\rho_k \propto
\exp(-k^4)$.

\begin{figure}[t!]
\centering
\subfigure[AR(4) model, $n=100,p=50$]{\includegraphics[width=0.48\textwidth]{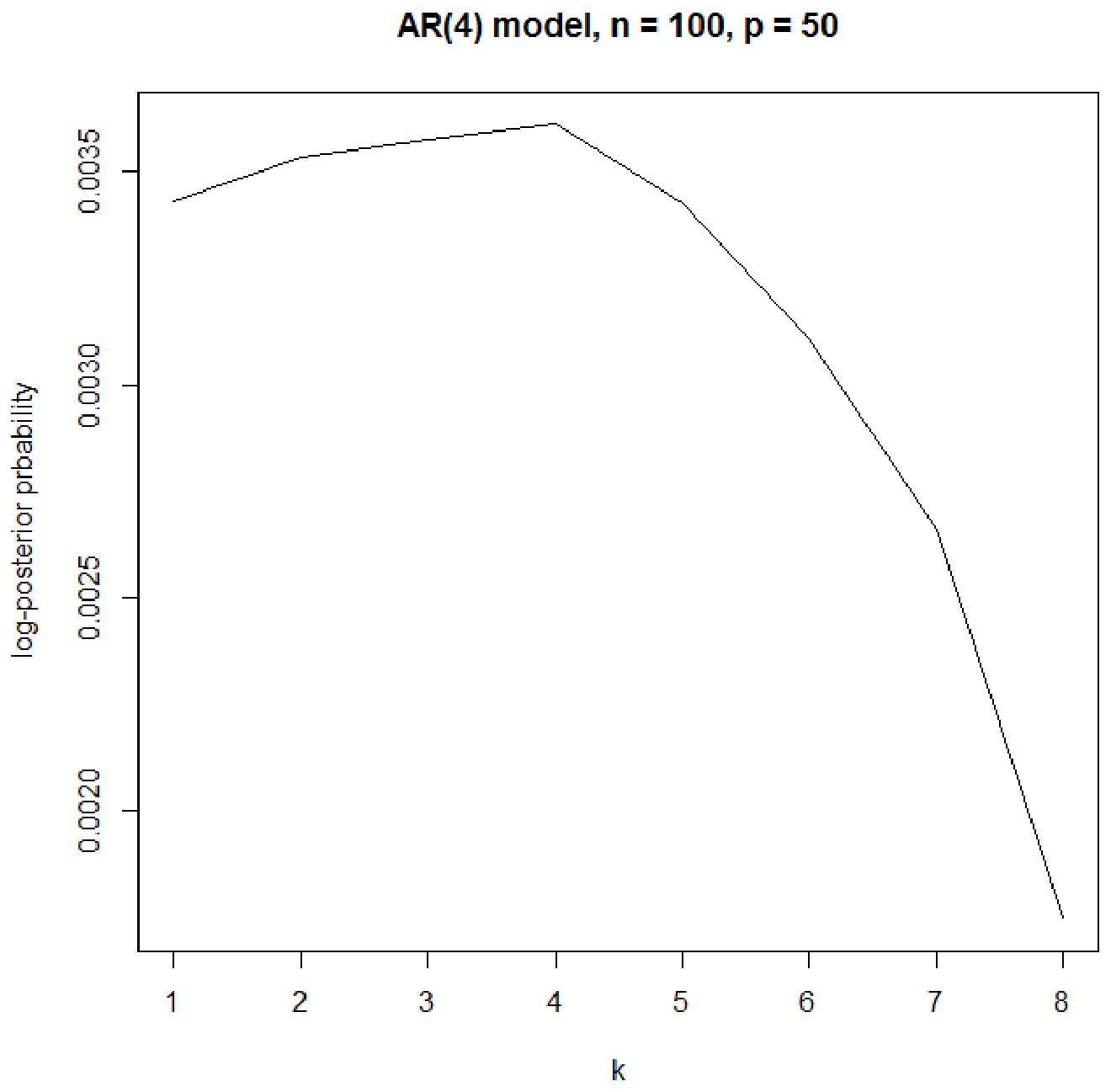}}\quad
\subfigure[AR(4) model, $n=100,p=100$]{\includegraphics[width=0.48\textwidth]{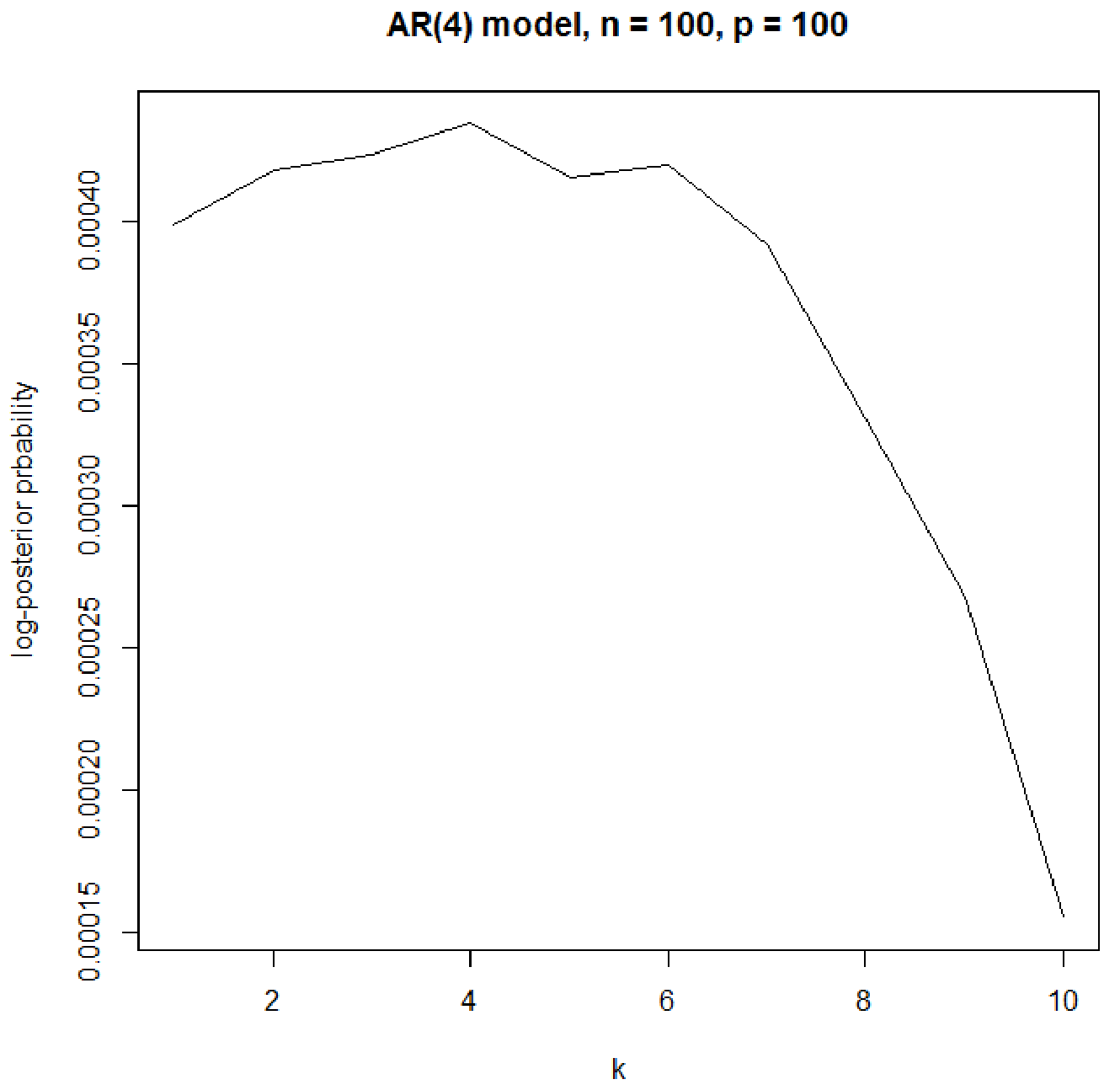}}\\
\subfigure[AR(4) model, $n=100,p=200$]{\includegraphics[width=0.48\textwidth]{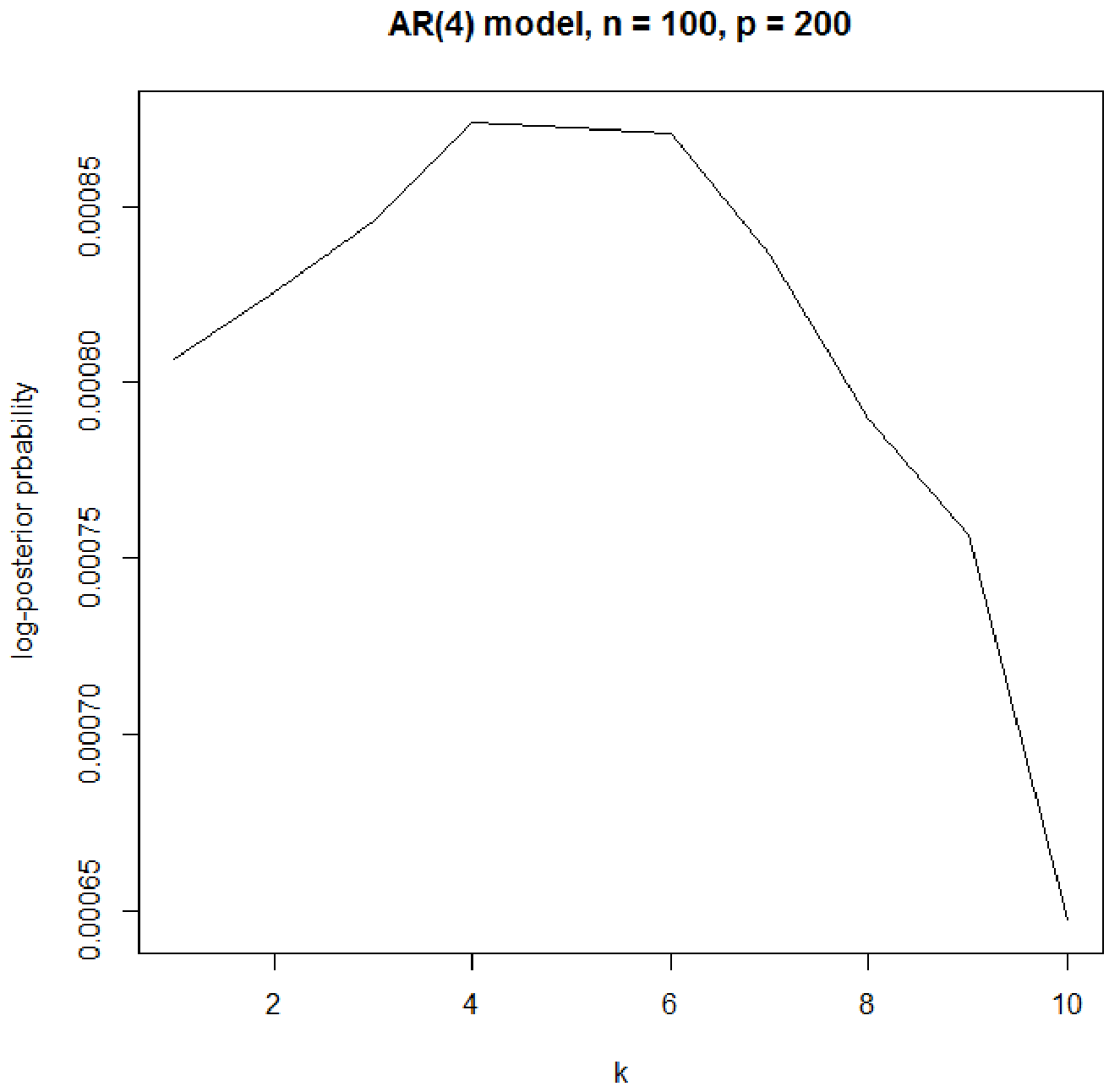}}\quad
\subfigure[AR(4) model, $n=100,p=500$]{\includegraphics[width=0.48\textwidth]{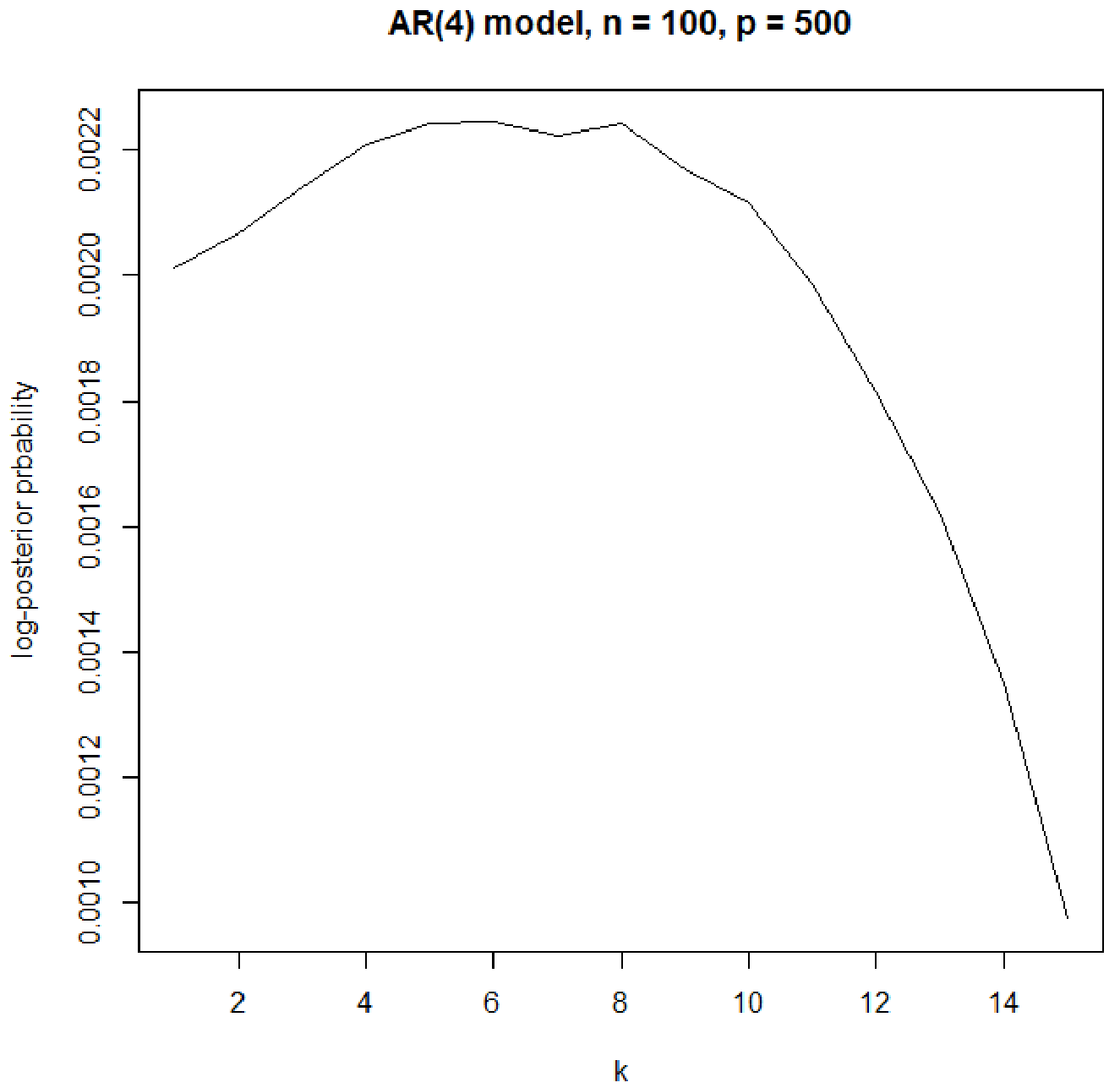}}
\vspace*{-3pt}
\caption{Figures showing log-posterior probabilities of graphs
corresponding to different banding parameters $k$. The graphs are
trimmed for larger values of $k$ as the log-posterior probabilities
decay further.}
\label{fig:ar450}\label{fig:ar4100}\label{fig:ar4200}\label{fig:ar4500}
\label{fig:ar4}\vspace*{-3pt}
\end{figure}

\section{Proofs}
\label{sec:proofs}

In this section we provide the proofs of the theorems and lemmas stated
in Section \ref{sec:mainresults}. Proofs of these results will require some additional
lemmas and propositions, which we include in the
\hyperref[app]{Appendix}.\vadjust{\goodbreak}

\begin{proof}[Proof of Proposition \ref{theorem:mleconvergence}]
Let the true covariance matrix be denoted by $\bSigma_0$, that is,
$\bOmega_0 = \bSigma_0^{-1}$ and $\bGamma_0$ is the inverse of the
banded version of $\bOmega_0$, that is $\bGamma_0 = (B_k(\bOmega
_0))^{-1}$. This is well defined for sufficiently large $k$ since
$B_k(\bOmega_0)$ is close to the non-singular matrix $\bOmega_0$ (even
in $L_\infty$ operator norm).
The $L_{\infty}$-operator norm of the difference between the graphical
MLE $\widehat{\bOmega}^{\mathrm{M}}$ and the true precision matrix
$\bOmega_0$ can be written as
%
\begin{equation}
\label{eqn:bound1}
\|\widehat{\bOmega}^{\mathrm{M}} - \bOmega_0\|_{(\infty,\infty)}
\leq\|
\widehat{\bOmega}^{\mathrm{M}} - B_k(\bOmega_0)\|_{(\infty,\infty
)} + \|
\bOmega_0 - B_k(\bOmega_0)\|_{(\infty,\infty)}.
\end{equation}
As shown in \cite{lauritzen1996graphical}, in a decomposable Gaussian
graphical model with precision matrix $\bOmega= \bSigma^{-1}$, we have,
\[
\sum_{j=1}^{p-k}\left(\bOmega_{C_j}\right)^0 - \sum
_{j=2}^{p-k}\left
(\bOmega_{S_j}\right)^0=
\sum_{j=1}^{p-k}\left(\bSigma_{C_j}^{-1}\right)^0 - \sum
_{j=2}^{p-k}\left(\bSigma_{S_j}^{-1}\right)^0.
\]
In our case we shall be working with $B_k(\bOmega_0)$ and the
corresponding inverse matrix $\bGamma_0$. Using the above
representation and also the form of the graphical MLE, the first term
on the right side of the bound in equation (\ref{eqn:bound1}) can be
written as,
\begin{eqnarray*}
\lefteqn{ \Biggl\|\sum_{j=1}^{p-k}\bigl(\bmS_{C_j}^{-1}\bigr)^0 -
\sum
_{j=2}^{p-k}\bigl(\bmS_{S_j}^{-1}\bigr)^0 - \sum_{j=1}^{p-k}
\bigl(\bOmega_{0,C_j}\bigr)^0 + \sum_{j=2}^{p-k}\bigl(\bOmega
_{0,S_j}\bigr)^0\Biggr\|_{(\infty,\infty)} }\nn\\
=\lefteqn{ \Biggl\|\sum_{j=1}^{p-k}\left(\bmS_{C_j}^{-1}\right)^0
- \sum
_{j=2}^{p-k}\bigl(\bmS_{S_j}^{-1}\bigr)^0 - \sum_{j=1}^{p-k}
\bigl(\bGamma_{0,C_j}^{-1}\bigr)^0 + \sum_{j=2}^{p-k}\bigl(\bGamma
_{0,S_j}^{-1}\bigr)^0\Biggr\|_{(\infty,\infty)} }\nn\\
&&\le\Biggl\|\sum_{j=1}^{p-k}\left(\!\bigl(\bmS_{C_j}^{-1}\bigr)^0 -
\bigl(\bGamma_{0,C_j}^{-1}\bigr)^0\right)\Biggr\|_{(\infty
,\infty)}
+ \Biggl\|\sum_{j=2}^{p-k}\left(\!\bigl(\bmS_{S_j}^{-1}\bigr)^0 -
\bigl(\bGamma_{0,S_j}^{-1}\bigr)^0\right)\Biggr\|_{(\infty,\infty)}.
\end{eqnarray*}
Using the fact that there are only $(2k+1)$ terms in above expressions
inside the norms which have a given row non-zero, it follows that
%
\begin{eqnarray}
\label{eqn:rhsterm1full}
\lefteqn{
\left\|\sum_{j=1}^{p-k}\left\{\left(\bmS_{C_j}^{-1}\right)^0 -
\left
(\bGamma_{0,C_j}^{-1}\right)^0\right\}\right\|_{(\infty,\infty)}
}\nn\\
&&=
\mathop{\max}_{l}\sum_{l'}\left|\left[\sum_{j=1}^{p-k}\left\{
\left(\bmS
_{C_j}^{-1}\right)^0 - \left(\bGamma_{0,C_j}^{-1}\right)^0\right\}
\right
]_{(l,l')}\right| \nonumber\\
&&\leq\mathop{\max}_{l}\sum_{j=1}^{p-k}\sum_{l'}\left|\left
[\left
(\bmS_{C_j}^{-1}\right)^0 - \left(\bGamma_{0,C_j}^{-1}\right
)^0\right
]_{(l,l')}\right| \nonumber\\
&& \leq(2k+1)\mathop{\max}_{j}\mathop{\max}_{l}\sum_{l'}\,\left
|\left[\bmS_{C_j}^{-1} - \bGamma_{0,C_j}^{-1}\right]_{(l,l')}\right|
\nonumber\\
&& = (2k+1)\mathop{\max}_{j}\left\|\bmS_{C_j}^{-1} - \bGamma
_{0,C_j}^{-1}\right\|_{(\infty,\infty)} \nn\\
&& \lesssim k^{3/2}\mathop{\max}_{j}\left\|\bmS_{C_j}^{-1} -
\bGamma
_{0,C_j}^{-1}\right\|_{(2,2)},
\label{new1}
\end{eqnarray}
where the subscript $(l,l')$ on the matrices above stand for their
respective $(l,l')$th entries.
Using the multiplicative inequality $\|\bmA\bmB\|\le\|\bmA\|\|\bmB
\|$
of operator norms, we have
%
\begin{eqnarray}
\label{eqn:rhsterm1}
\lefteqn{\mathop{\max}_{j}\left\|\bmS_{C_j}^{-1} - \bGamma
_{0,C_j}^{-1}\right\|_{(2,2)} }\nn\\
&&= \mathop{\max}_{j}\left\|\bGamma_{0,C_j}^{-1}(\bGamma_{0,C_j} -
\bmS
_{C_j})\bmS_{C_j}^{-1}\right\|_{(2,2)} \nn\\
&& \leq\mathop{\max}_{j}\left\{\left\|\bGamma_{0,C_j}^{-1}\right
\|
_{(2,2)}\left\|\bGamma_{0,C_j} - \bmS_{C_j}\right\|_{(2,2)}\left\|
\bmS
_{C_j}^{-1}\right\|_{(2,2)}\right\}.
\end{eqnarray}
The middle term $\|\bGamma_{0,C_j} - \bmS_{C_j}\|_{(2,2)}$
in the above expression is bounded by
%
\begin{equation}
\label{eqn:midterm}
\left\|\bSigma_{0,C_j} - \bmS_{C_j}\right\|_{(2,2)} + \left\|
\bGamma
_{0,C_j} - \bSigma_{0,C_j}\right\|_{(2,2)}.
\end{equation}

We now estimate the norm difference between the matrices $\bGamma_0$
and $\bSigma_0$. We have,
%
\begin{eqnarray}
\label{eqn:GammaSigmanormbound}
{ \left\| \bGamma_0 - \bSigma_0 \right\|_{(2,2)} }
& = &\left\| \bGamma_0 (\bSigma_0^{-1} - \bGamma_0^{-1}) \bSigma_0
\right\|_{(2,2)} \nn\\
& = &\left\| \bGamma_0 \left(\bOmega_0 - B_k(\bOmega_0) \right)
\bSigma
_0 \right\|_{(2,2)} \nn\\
& \leq&\left\|\bGamma_0\right\|_{(2,2)} \left\| \bOmega_0 -
B_k(\bOmega
_0) \right\|_{(2,2)} \left\| \bSigma_0 \right\|_{(2,2)}.
\end{eqnarray}

Now, $\| \bSigma_0\|_{(2,2)} = (\eig_1 (\bOmega_0))^{-1}
\leq\varepsilon_0^{-1}$ by assumption and
\[
\left\| \bOmega_0 - B_k(\bOmega_0) \right\|_{(2,2)} \leq\left\|
\bOmega
_0 - B_k(\bOmega_0) \right\|_{(\infty,\infty)} \leq\gamma(k)
\]
since $\bOmega_0 \in\mc{U}(\varepsilon_0,\gamma)$. Also,
$\|\bGamma_0\|_{(2,2)} = (\mathrm{eig}_1(B_k(\bOmega
_0)))^{-1}$. Note that $B_k(\bOmega_0) \geq\bOmega_0 - \gamma(k)\bm{I}$,
which has minimum eigenvalue bounded away from zero by the assumption
on $\bOmega_0$ and since $\gamma(k)\to0$. Thus, equation (\ref
{eqn:GammaSigmanormbound}) gives,
%
\begin{equation}
\label{eqn:GammaSigmanormbound2}
\left\| \bGamma_0 - \bSigma_0 \right\|_{(2,2)} = O(\gamma(k)).
\end{equation}
The above norm bound also applies to the difference of corresponding
submatrices defined by the cliques and separators of the graph.
Also note that
\[
\left\|\bGamma_{0,C_j}^{-1}\right\|_{(2,2)}\le\left\|\bOmega
_{0,C_j}\right\|_{(2,2)}\le\left\|\bOmega_0\right\|_{(2,2)}\le\ve_0^{-1}
\]
as $\bGamma_{0,C_j}^{-1}\le(B_k(\bOmega_0))_{C_j} = \bOmega
_{0,C_j}$ and
$\bOmega_0\in\mathcal{U}(\ve_0,\ga)$.
Further, $\|\bSigma_{0,C_j}\|_{(2,2)}\le\|\bSigma
_{0}\|_{(2,2)}
=(\eig_1 (\bOmega_0))^{-1}\le\ve_0^{-1}$. Thus applying
Lemma \ref{lemmaA4} with $r=2$ and $\bmD=\bSigma_{0,C_j}$, we obtain
\begin{eqnarray*}
\P\left(\mathop{\max}_{j}\left\|\bmS_{C_j}^{-1}\right\|_{(2,2)}
\geq
M_1\right)
&\leq& p\, \mathop{\max}_{j}\P\left(\left\|\bmS_{C_j}^{-1}\right
\|
_{(2,2)} \geq M_1\right)\\
&\leq& M_1' pk^2\exp[-m_1nk^{-2}]
\end{eqnarray*}
for some constant $M_1,M_1',m_1>0$, while from Lemma \ref{lemmaA3},
\[
\P\left(\mathop{\max}_{j}\left\|\bSigma_{0,C_j} - \bmS
_{C_j}\right\|
_{(2,2)} \geq t\right) \leq M_2 pk^2 \exp[-m_2 nk^{-2}t^2]
\]
for $|t| < m_2'$ for some constants $M_2,m_2,m_2'>0$.
Now choose $t = Ak(n^{-1}\log\,p)^{1/2}$ for some sufficiently large
$A$ to get the bound, using equations (\ref{eqn:rhsterm1full}), (\ref
{eqn:rhsterm1}), (\ref{eqn:midterm}) and (\ref{eqn:GammaSigmanormbound2}),
%
\begin{equation}
\label{eqn:mle1}
\left\|\sum_{j=1}^{p-k}\left(\left(\bmS_{C_j}^{-1}\right)^0 -
\left
(\bSigma_{0,C_j}^{-1}\right)^0\right)\right\|_{(\infty,\infty)}
=O_P\left(k^{5/2}(n^{-1}
\log p)^{1/2} + k^{3/2}\gamma(k)\right).
\end{equation}
By a similar argument, we can establish that
%
\begin{equation}
\label{eqn:mle1'}
\left\|\sum_{j=2}^{p-k}\left(\left(\bmS_{S_j}^{-1}\right)^0 -
\left
(\bSigma_{0,S_j}^{-1}\right)^0\right)\right\|_{(\infty,\infty
)}=O_P\left
(k^{5/2}(n^{-1}
\log p)^{1/2} + k^{3/2}\gamma(k)\right).
\end{equation}
Therefore, as $\|\bOmega_0 - B_k(\bOmega_0)\|_{(\infty,\infty)}
\leq
\gamma(k)$ for any $\bOmega_0\in\mathcal{U}(\ve_0,\ga)$, the
assertion follows.
\end{proof}

\begin{proof}[Proof of Lemma \ref{lemma:Bayesmle}]
We shall first prove the result for $\widehat{\bOmega}_{L_2}^{\mathrm
{B}}$. The $L_{\infty}$-operator norm of $\widehat{\bOmega
}_{L_2}^{\mathrm{B}} - \widehat{\bOmega}^{\mathrm{M}}$ can be
bounded by
%
\begin{eqnarray}
&& \rec{n}\left\|\sum_{j=2}^{p-k}\left((n^{-1}\bmI_{k}+\bmS
_{S_j})^{-1}\right)^0\right\|_{(\infty,\infty)} \label{term1} \\
&&{}+ \f{\delta+k+n}{n}\left\|\sum_{j=1}^{p-k}\left((n^{-1}\bmI
_{k+1}+\bmS
_{C_j})^{-1}\right)^0 - \sum_{j=1}^{p-k}(\bmS_{C_j}^{-1})^0 \right\|
_{(\infty,\infty)}\label{term2}\\
&&{}+ \f{\delta+k+n}{n}\left\|\sum_{j=2}^{p-k}\left((n^{-1}\bmI
_{k}+\bmS
_{S_j})^{-1}\right)^0 - \sum_{j=2}^{p-k}(\bmS_{S_j}^{-1})^0 \right\|
_{(\infty,\infty)}\label{term3} \\
&&{}+ \left|\f{\delta+k+n}{n}-1\right| \left\|\sum_{j=1}^{p-k}\left
(\bmS
_{C_j}^{-1}\right)^0 - \sum_{j=2}^{p-k}\left(\bmS_{S_j}^{-1}\right
)^0\right\|_{(\infty,\infty)}.\label{term4}
\end{eqnarray}
Now, the expression in (\ref{term1}) above is
\begin{eqnarray*}
\lefteqn{ \rec{n}\mathop{\max}_{l}\sum_{l'}\left|\left[\sum
_{j=2}^{p-k}\left((n^{-1}\bmI_{k}+\bmS_{S_j})^{-1}\right)^0\right
]_{(l,l')}\right|} \\
&&\leq\rec{n}\mathop{\max}_{l}\sum_{j=2}^{p-k}\sum_{l'}\left
|\left
[\left((n^{-1}\bmI_{k}+\bmS_{S_j})^{-1}\right)^0\right
]_{(l,l')}\right|
\\
&&\leq\f{2k+1}{n}\mathop{\max}_{j}\mathop{\max}_{l}\sum_{l'}\,
\left
|\left[(n^{-1}\bmI_{k}+\bmS_{S_j})^{-1}\right]_{(l,l')}\right| \\[-3pt]
&&= \f{2k+1}{n}\mathop{\max}_{j}\left\|(n^{-1}\bmI_{k}+\bmS
_{S_j})^{-1}\right\|_{(\infty,\infty)},
\end{eqnarray*}
which is bounded by a multiple of
%
\begin{equation}
\label{firstterm1}
\f{k^{3/2}}{n}\mathop{\max}_{j}\left\|(n^{-1}\bmI_{k}+\bmS
_{S_j})^{-1}\right\|_{(2,2)}
\leq\f{k^{3/2}}{n}\mathop{\max}_{j}\left\|\bmS_{S_j}^{-1}\right\|
_{(2,2)}.
\end{equation}
In view of Lemma~\ref{lemmaA4}, we have that for some $M_3,M_3',m_3>0$,
\[
\P\left(\mathop{\max}_{j}\left\|\bmS_{S_j}^{-1}\right\|_{(2,2)}
\geq
M_3\right)
\leq M_3' pk^2\exp[-m_3 nk^{-2}],
\]
which converges to zero if $k^2(\log p)/n \rightarrow0$.
This leads to the estimate
%
\begin{equation}
\label{firstterm}
n^{-1}\left|\left|\sum_{j=2}^{p-k}\left((n^{-1}\bmI_{k}
+\bmS_{S_j})^{-1}\right)^0\right|\right|_{(\infty,\infty)} =
O_P\left
(k^{3/2}/n\right).
\end{equation}
For (\ref{term2}), we observe that
\begin{eqnarray*}
\lefteqn{\left\|\sum_{j=1}^{p-k}\left((n^{-1}\bmI_{k+1}+\bmS
_{C_j})^{-1}\right)^0 - \sum_{j=1}^{p-k}(\bmS_{C_j}^{-1})^0 \right\|
_{(\infty,\infty)} }\nonumber\\[-3pt]
&&\leq(2k+1)\,\mathop{\max}_{j}\left\|(n^{-1}\bmI_{k+1}+\bmS
_{C_j})^{-1} - \bmS_{C_j}^{-1} \right\|_{(\infty,\infty)} \nonumber
\\[-3pt]
&&\lesssim k^{3/2}\,\mathop{\max}_{j}\,\left\|(n^{-1}\bmI
_{k+1}+\bmS
_{C_j})^{-1} - \bmS_{C_j}^{-1} \right\|_{(2,2)} \nonumber
\end{eqnarray*}
and that
\begin{eqnarray*}
\lefteqn{\left\|(n^{-1}\bmI_{k+1}+\bmS_{C_j})^{-1} - \bmS_{C_j}^{-1}
\right\|_{(2,2)} }\\[-3pt]
&&\le
\left\|(n^{-1}\bmI_{k+1}+\bmS_{C_j})^{-1}\right\|_{(2,2)}\left\|
n^{-1}\bmI_{k+1}\right\|_{(2,2)}
\left\|\bmS_{C_j}^{-1} \right\|_{(2,2)}\\[-3pt]
&&\leq n^{-1}\left\|\bmS_{C_j}^{-1} \right\|_{(2,2)}^2.
\end{eqnarray*}
Now under $k^2(\log p)/n \rightarrow0$, an application of Lemma~\ref
{lemmaA4} leads to the bound
$O_P(k^{3/2}/n)$ for (\ref{term2}).

A similar argument gives rise to the same $O_P(k^{3/2}/n)$ bound for
(\ref{term3}).

Finally to consider (\ref{term4}). As argued in bounding (\ref{term1}),
we have that
\begin{eqnarray*}
\lefteqn{\left\|\sum_{j=1}^{p-k}\left(\bmS_{C_j}^{-1}\right)^0 -
\sum
_{j=2}^{p-k}\left(\bmS_{S_j}^{-1}\right)^0\right\|_{(\infty,\infty
)}}\\[-3pt]
&&\leq k^{1/2}(2k+1)\left[\mathop{\max}_{j}\left\|\bmS
_{C_j}^{-1}\right\|_{(2,2)}
+ \mathop{\max}_{j} \left\|\bmS_{S_j}^{-1}\right\|_{(2,2)} \right]
=O_P(k^{3/2}),
\end{eqnarray*}
under the assumption $k^2(\log p)/n\to0$ by another application of
Lemma \ref{lemmaA4}.
Since $n^{-1}(\delta+k+n)-1=O(k/n)$, it follows that the expression in
(\ref{term4}) is $O_P(k^{5/2}/n)$, which is the weakest estimate among
all terms in the bound for $\|\widehat{\bOmega}_{L_2}^B-\widehat
{\bOmega
}^M\|_{(\infty,\infty)}$. The result thus follows for $\widehat
{\bOmega
}_{L_2}^{\mathrm{B}}$.

The assertion for the estimator $\widehat{\bOmega}_{L_1}^{\mathrm{B}}$
follows similarly.
\end{proof}

\begin{proof}[Proof of Proposition \ref{prop:Bayesconvergence}]
The proof follows from Theorem \ref{theorem:mleconvergence} and Lemma
\ref{lemma:Bayesmle} using the triangle inequality.
\end{proof}

\begin{proof}[Proof of Theorem \ref{theorem:Posteriorconvergence}]
The posterior distribution of the precision matrix $\bOmega$ given the
data $\bX$ is a $G$-Wishart distribution $W_G(\delta+n,\bmI_p +
n\bmS
)$. We can write $\bOmega$ as
%
\begin{equation}
\label{eqn:OmegaandW}
\bOmega= \sum_{j=1}^{p-k}\left(\bOmega_{C_j}\right)^0 - \sum
_{j=2}^{p-k}\left(\bOmega_{S_j}\right)^0 = \sum_{j=1}^{p-k}\left
(\bSigma
_{C_j}^{-1}\right)^0 - \sum_{j=2}^{p-k}\left(\bSigma
_{S_j}^{-1}\right)^0.
\end{equation}
The submatrix $\bSigma_{C_j}$ for any clique $C_j$ has a inverse
Wishart distribution with parameters $\delta+ n$ and scale matrix
$(\bmI_p + n\bmS)_{C_j}$, $j=1,\ldots,p-k$. Thus, $W_{C_j} = \bSigma
_{C_j}^{-1}$ has a Wishart distribution induced by the corresponding
inverse Wishart distribution. In particular, if $i\in C_j$, then $\tau
_{in}^{-1}w_{ii}$ has chi-square distribution with $(\de+n)$ degrees of
freedom, where $\tau_{in}$ is the $(i,i)$th entry of $((\bmI+\bmS
_{C_j})^{-1})^0$. Fix a clique $C=C_j$ and define $\bm{T}_{n}=\diag
(w_{ii}:\,i\in C)$. For $i,j\in C$, let $w_{ij}^*=w_{ij}/\sqrt{\tau
_{in}\tau_{jn}}$ and $\bm{W}_C^*=\defMatrix{w_{ij}^*:\, i,j\in C}$.
Then $\bm{W}_C^*$ given $\bX$ has a Wishart distribution with
parameters $\delta+ n$ and scale matrix $\bm{T}_{n}^{-1/2}(\bmI_{k+1}
+ n\bmS_{C})\bm{T}_{n}^{-1/2}$.

We first note that $\mathop{\max}_{i} \tau_{in} = O_P(n^{-1})$.
To see this, observe that $(\bmI_k+n\bm{S}_C)^{-1}\le n^{-1}\bm
{S}_C^{-1}$, so that
\[
\mathop{\max}_{i} |\tau_{in}| \leq\frac{1}{n}\|\bmS_C^{-1}\|
_{(2,2)}=O_P(n^{-1})
\]
in view of Lemma \ref{lemmaA4}.
On the other hand, from Lemma~\ref{lemmaA3}, it follows that $\max_C\|
\bmS_C\|_{(2,2)}=O_P(1)$, so with probability tending to one, $\bmS
_C\le L \bmI_C$, and hence $(\bmI+n\bmS)_C^{-1}\ge(1+n L)^{-1}\bmI_C$
simultaneously for all cliques, for some constant $L>0$. Hence $\max_i
\tau_{in}^{-1} = O_P(n)$.
Consequently, with probability tending to one, the maximum eigenvalue
of $\bm{T}_{n}^{-1/2}(\bmI_{k+1} + n\bmS_{C})\bm{T}_{n}^{-1/2}$ is
bounded by a constant depending only on $\ve_0$, simultaneously for all
cliques. Hence applying Lemma~A.3 of \cite{bickel2008regularized}, it
follows that
for all $i,j$,
%
\begin{equation}
\label{post2.2}
\P\left[|w_{ij} - \E(w_{ij}|\bX)|\geq t\right] \leq M_4\exp
[-m_4(\delta+n)t^2],
\quad|t|<m_4',
\end{equation}
for some constants $M_4,m_4,m_4'>0$ depending on $\ep_0$ only.

Now, as a $G$-Wishart prior gives rise to a $k$-banded structure, as
arguing in the bounding of (\ref{term1}) and using (\ref
{eqn:OmegaandW}), we have that, for some $M_5,m_5,m_5'>0$, and all $|t|<m_5'$,
%
\begin{equation}
\P\left\{\|\bOmega- \widehat{\bOmega}_{L_2}^{\mathrm{B}}\|
_{(\infty
,\infty)} \geq k^2t | \bX\right\} \leq M_5 p k^2 \exp[-m_5 n t^2].
\end{equation}
The reduction in the number of terms in the rows from $p$ to $(2k+1)$
is possible due to the fact that the $G$-Wishart posterior preserves
the banded structure of the precision matrix.
Choosing $t = A(n^{-1}\log p)^{1/2}$, with $A$ sufficiently large, we get
%
\begin{equation}
\label{eqn:post2.3}
\E_0[\P\{\|\bOmega- \widehat{\bOmega}_{L_2}^{\mathrm{B}}\|
_{(\infty
,\infty)} \geq A k^2(n^{-1}\log p)^{1/2} | \bX\}] \rightarrow0.
\end{equation}
Therefore, using Proposition \ref{prop:Bayesconvergence},
\begin{eqnarray*}
\lefteqn{
\E_0\left[\P\left\{\|\bOmega- \bOmega_0\|_{(\infty,\infty)} >
2\epsilon
_n | \bX\right\}\right] }\\
&&\leq\P_0\left\{\|\widehat{\bOmega}_{L_2}^{\mathrm{B}} - \bOmega
_0\|
_{(\infty,\infty)} > \epsilon_n | \bX\right\} + \E_0\left[\P
\left\{\|
\bOmega- \widehat{\bOmega}_{L_2}^{\mathrm{B}}\|_{(\infty,\infty)} >
\epsilon_n | \bX\right\}\right],
\end{eqnarray*}
which converges to zero if $\ep_n=A(k^{5/2}(n^{-1} \log p)^{1/2} +
k^{3/2}\gamma(k))$.
\end{proof}

\begin{proof}[Proof of (\ref{eqn:refprior})]
For the graph induced by banding, the posterior mean under the
reference prior is given by the expression
\[
\widehat{\bOmega}^R=
\E(\bOmega|\bmS) = \sum_{j=1}^{p-k}(\bmS_{C_j}^{-1})^0 -
(1-n^{-1})(\bmS
_{S_2}^{-1})^0 - (1-n^{-1})\sum_{j=3}^{p-k}(\bmS_{S_j}^{-1})^0.
\]
Therefore
\begin{eqnarray*}
\|\widehat{\bOmega}^{\mathrm{R}} - \widehat{\bOmega}^{\mathrm
{M}}\|
_{(\infty,\infty)} &= & \left\|n^{-1}(\bmS_{S_2}^{-1})^0 +
n^{-1}\sum
_{j=3}^{p-k}(\bmS_{S_j}^{-1})^0\right\|_{(\infty,\infty)} \\
&\leq& n^{-1}\left\|\sum_{j=2}^{p-k}(\bmS_{S_j}^{-1})^0\right\|
_{(\infty,\infty)} + n^{-1}\left\|(\bmS_{S_2}^{-1})^0\right\|
_{(\infty
,\infty)}.
\end{eqnarray*}
The rest of the proof proceeds as in Lemma \ref{lemma:Bayesmle}.
\end{proof}

\appendix

\section*{Appendix: Proofs of auxiliary results}\label{app}

In this section we give proofs of some lemmas we have used in the
paper, which are of some general interest.

The first lemma deals with the various equivalence conditions for
matrix norms and is easily found in standard textbooks.
\begin{lemma}
\label{lemmaA1}
For a symmetric matrix $\bmA$ of order $k$, we have the following:
\begin{enumerate}
\item$\|\bmA\|_{(2,2)} \leq\|\bmA\|_{(\infty,\infty)}\leq\sqrt
{k}\|
\bmA\|_{(2,2)}$;
\item$\|\bmA\|_\infty\leq\|\bmA\|_{(2,2)} \leq\|\bmA\|_{(\infty
,\infty)} \leq k\|\bmA\|_{\infty}$;
\end{enumerate}
\end{lemma}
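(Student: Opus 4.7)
The statement is a collection of standard norm-equivalence inequalities for symmetric $k \times k$ matrices, and my plan is to establish each inequality separately using elementary linear algebra, without invoking anything beyond Cauchy--Schwarz and the variational characterization of operator norms.

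For part (1), I would first prove the upper bound $\|\bmA\|_{(\infty,\infty)} \le \sqrt{k}\,\|\bmA\|_{(2,2)}$. Fix a row index $i$; by symmetry, the $i$th row of $\bmA$ equals $\bmA\bm{e}_i$, so $\sum_{j} a_{ij}^{2} = \|\bmA\bm{e}_i\|_2^{2} \le \|\bmA\|_{(2,2)}^{2}$. Applying Cauchy--Schwarz to the vector of absolute values of the row, $\sum_j |a_{ij}| \le \sqrt{k}\,(\sum_j a_{ij}^{2})^{1/2} \le \sqrt{k}\,\|\bmA\|_{(2,2)}$, and maximizing over $i$ gives the bound. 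For the other inequality $\|\bmA\|_{(2,2)} \le \|\bmA\|_{(\infty,\infty)}$, I would use the fact that for a symmetric matrix the spectral norm equals $\max_i |\eig_i(\bmA)|$, together with the general inequality $|\eig(\bmA)| \le \|\bmA\|_{(\infty,\infty)}$ (which follows from taking any eigenvector $\bv$, choosing the coordinate $i$ with $|v_i|$ maximal, and expanding $\lambda v_i = \sum_j a_{ij}v_j$ to obtain $|\lambda| \le \sum_j |a_{ij}|$). Alternatively one can cite the submultiplicative bound $\|\bmA\|_{(2,2)}^{2} \le \|\bmA\|_{(1,1)}\|\bmA\|_{(\infty,\infty)}$ and use that these two are equal for symmetric matrices, as was already noted in Section~\ref{sec:Notations}.

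For part (2), the inequality $\|\bmA\|_{\infty} \le \|\bmA\|_{(2,2)}$ is immediate from $|a_{ij}| = |\bm{e}_i^{T}\bmA\bm{e}_j| \le \|\bmA\|_{(2,2)}$. The middle inequality $\|\bmA\|_{(2,2)} \le \|\bmA\|_{(\infty,\infty)}$ is the same one already proved in part (1). Finally, $\|\bmA\|_{(\infty,\infty)} = \max_i \sum_j |a_{ij}| \le k \max_{i,j}|a_{ij}| = k\|\bmA\|_{\infty}$ is a direct term-count.

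There is really no main obstacle here; the proof is purely mechanical. The only point requiring a small amount of care is the assertion $\|\bmA\|_{(2,2)} \le \|\bmA\|_{(\infty,\infty)}$, where one must invoke either the eigenvalue-versus-row-sum argument or the Riesz--Thorin-type interpolation bound; since the paper has already stated the identification $\|\bmA\|_{(1,1)} = \|\bmA\|_{(\infty,\infty)}$ for symmetric $\bmA$ and the formula for $\|\bmA\|_{(2,2)}$ as a maximum eigenvalue, the simplest route is the direct Gershgorin-style calculation on a unit eigenvector.
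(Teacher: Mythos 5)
Your proof is correct. The paper itself offers no argument for this lemma --- it simply remarks that the inequalities are ``easily found in standard textbooks'' --- and your elementary derivation (Cauchy--Schwarz on the rows for the $\sqrt{k}$ bound, the Gershgorin-style eigenvector argument for $\|\bmA\|_{(2,2)}\le\|\bmA\|_{(\infty,\infty)}$, and direct term counts for the rest) is precisely the standard textbook proof being alluded to, so there is nothing to reconcile.
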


Now we show that matrices $\bOmega$ belonging to the class $\mathcal
{U}(\varepsilon_0,\gamma)$ automatically have bounded $L_\iy
$-operator norm.

\begin{lemma}
\label{lemma:classequivalence}
For every $\varepsilon_0$, there exist $K$ depending only on $\ve_0$
and $\ga(\cdot)$ such that
for all $\Om\in\mathcal{U}(\varepsilon_0,\gamma)$, we have $\|
\bOmega\|
_{(\infty,\infty)}\leq K$.
\end{lemma}

\begin{proof}
For any fixed $k_0$,
%
\begin{eqnarray}
\|\bOmega\|_{(\infty,\infty)} &\leq& \|\bOmega- B_{k_0}(\bOmega)\|
_{(\infty,\infty)} + \|B_{k_0}(\bOmega)\|_{(\infty,\infty)}
\nonumber\\
&\leq& \gamma(k_0) + (2k_0 + 1)\|\bOmega\|_{\infty} \nonumber\\
&\leq& \gamma(k_0) + (2k_0 + 1)\|\bOmega\|_{(2,2)} \nonumber\\
&\leq& \gamma(k_0) + (2k_0 + 1)\varepsilon_0^{-1},
\end{eqnarray}
so the conclusion holds for $K = \gamma(k_0) + (2k_0 + 1)\varepsilon_0^{-1}$.
\end{proof}

\begin{lemma}
\label{lemmaA3}
Let $\bZ_i$, $i=1,\ldots,n$, be i.i.d. $k$-dimensional random vectors
distributed as $\mathrm{N}_k(\mathbf{0},\bmD)$ and $\|\bmD\|_{(2,2)}
\leq K$. Then for the sample variance $\bmS=n^{-1}\sumin\bZ_i\bZ_i^T$,
we have for $r \in\{2,\infty\}$
%
\begin{equation}
\P\left[\left\|\bmS- \bmD\right\|_{(r,r)} \geq t \right] \leq M k^2
\exp(-mnk^{-2}t^2), \quad|t| \leq m',
\end{equation}
where $M,m,m'>0$ depend on $K$ only.

In particular, if $k^2(\log k)/n\to0$, then $\|\bmS\|_{(\infty
,\infty
)}=O_P(1)$.
\end{lemma}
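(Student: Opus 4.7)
The plan is to reduce the operator-norm bound to an entrywise bound and then apply a standard Gaussian concentration inequality component by component. By the norm-equivalence statements in Lemma \ref{lemmaA1}, both $\|\bmS_n-\bmD\|_{(2,2)}$ and $\|\bmS_n-\bmD\|_{(\iy,\iy)}$ are bounded above by $k\|\bmS_n-\bmD\|_{\iy}=k\max_{i,j}|s_{ij}-d_{ij}|$. Hence
\[
\P\left[\|\bmS_n-\bmD\|_{(r,r)}\ge t\right]\le \P\left[\max_{i,j}|s_{ij}-d_{ij}|\ge t/k\right]\le k^{2}\max_{i,j}\P\left[|s_{ij}-d_{ij}|\ge t/k\right],
\]
so it suffices to prove an entrywise sub-Gaussian type bound $\P[|s_{ij}-d_{ij}|\ge s]\le M'\exp(-m' n s^{2})$ for $|s|$ small, with constants depending only on $K$.

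For the entrywise bound I would use the polarization identity $Z_{li}Z_{lj}=\tfrac{1}{4}\{(Z_{li}+Z_{lj})^{2}-(Z_{li}-Z_{lj})^{2}\}$, which represents $s_{ij}-d_{ij}$ as a linear combination of two centered chi-square type averages of the form $n^{-1}\sum_{l}U_{l}^{2}-\E U_{l}^{2}$ where $U_{l}$ is a centered Gaussian whose variance is uniformly bounded by a constant multiple of $K$ (since the variances and covariances of the $Z_{li}$'s are controlled by $\|\bmD\|_{(2,2)}\le\|\bmD\|_{(r,r)}\le K$). Such averages are sub-exponential, so a standard Bernstein / moment-generating-function computation gives the required bound $\P[|s_{ij}-d_{ij}|\ge s]\le M'\exp(-m' n s^{2})$ for $|s|\le m''$. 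Substituting $s=t/k$ and multiplying by the union-bound factor $k^{2}$ yields the claimed inequality with $m=m'$ and $M=M'$.

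For the \textit{in particular} statement I would use the triangle inequality $\|\bmS_{n}\|_{(r,r)}\le\|\bmD\|_{(r,r)}+\|\bmS_{n}-\bmD\|_{(r,r)}\le K+\|\bmS_{n}-\bmD\|_{(r,r)}$. Choosing $t=A k(n^{-1}\log k)^{1/2}$ with $A$ a sufficiently large constant, the main bound gives $\P[\|\bmS_{n}-\bmD\|_{(r,r)}\ge t]\le Mk^{2}\exp(-mA^{2}\log k)$, which tends to $0$ as soon as $k^{2}(\log k)/n\to 0$ (so that in particular $t$ is eventually within the admissible range $|t|\le m'$). Since such a $t$ tends to $0$, it follows that $\|\bmS_{n}\|_{(r,r)}$ is stochastically bounded by $K+o(1)$, hence $O_{P}(1)$.

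The only genuinely non-routine step is the sub-exponential tail bound for $n^{-1}\sum_{l}(Z_{li}Z_{lj}-\E Z_{li}Z_{lj})$; everything else is a union bound and triangle inequality. One has to keep track that the constants in that Bernstein-type inequality depend only on $K$ (through the variances of $Z_{li}\pm Z_{lj}$), which they do because $\var(Z_{li}\pm Z_{lj})\le 2(\var(Z_{li})+\var(Z_{lj}))\le 4\|\bmD\|_{(2,2)}\le 4K$; this uniformity is what makes the union bound over $k^{2}$ entries produce universal constants $M,m,m'$ in the final statement.
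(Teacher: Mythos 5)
Your proposal is correct and follows essentially the same route as the paper: the paper also reduces the operator norm to the entrywise maximum via $\|\bmS_n-\bmD\|_{(r,r)}\le k\|\bmS_n-\bmD\|_{\infty}$ from Lemma \ref{lemmaA1}, applies a union bound over the $k^2$ entries, and invokes the entrywise sub-Gaussian tail bound of Lemma~A.3 of \cite{bickel2008regularized} — whose proof is precisely the polarization-plus-Bernstein argument you sketch. The only difference is that you prove the entrywise concentration from scratch where the paper cites it, which is a matter of self-containedness rather than substance.
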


\begin{proof}
By the estimate of the large deviation probability given in Lemma~A.3
of \cite{bickel2008regularized}, it is immediate that $\P[\|\bmS-
\bmD
\|_{\iy} \geq t] \leq M k^2 \exp(-mnt^2)$.
Now the assertion follows by noting from Lemma \ref{lemmaA1} that $\|
\bmS- \bmD\|_{(r,r)} \leq k\|\bmS- \bmD\|_{\infty}$.
\end{proof}

\begin{lemma}
\label{lemmaA4}
Let $\bZ_i$, $i=1,\ldots,n$, be i.i.d. $k$-dimensional random vectors
distributed as $\mathrm{N}_k(\mathbf{0},\bmD)$ and $\max\{\|\bmD
^{-1}\|_{(r,r)}, \|\bmD\|_{(2,2)} \}\leq K$ for $r \in\{
2,\infty\}$. Then for the sample variance $\bmS=n^{-1}\sumin\bZ_i\bZ_i^T$,
we have
%
\begin{equation}
\P\left[\|\bmS^{-1}\|_{(r,r)} \geq M\right] \leq M'k^2 \exp(-mnk^{-2}C'^2),
\end{equation}
where $M>K$ and $M',m>0$ depend on $M$ and $K$ only.
\end{lemma}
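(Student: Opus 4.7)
The plan is to reduce the problem to a deviation bound on $\bmS_n-\bmD$, which is exactly what Lemma~\ref{lemmaA3} supplies, by means of a standard Neumann-series perturbation argument that is uniform in the choice of the ambient operator norm $\|\cd\|_{(r,r)}$ for $r\in\{2,\iy\}$.

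First, I would use the factorization $\bmS_n=\bmD(\bmI+\bmD^{-1}(\bmS_n-\bmD))$, which on the event that $\bmI+\bmD^{-1}(\bmS_n-\bmD)$ is invertible yields
\begin{equation*}
\bmS_n^{-1}=(\bmI+\bmD^{-1}(\bmS_n-\bmD))^{-1}\bmD^{-1}.
\end{equation*}
Fix any $M>K$ and set $\rho=1-K/M\in(0,1)$, so that $K/(1-\rho)=M$. On the event $\{\|\bmD^{-1}(\bmS_n-\bmD)\|_{(r,r)}\le \rho\}$, the Neumann series for $(\bmI+\bmD^{-1}(\bmS_n-\bmD))^{-1}$ converges in $(r,r)$-operator norm with value bounded by $(1-\rho)^{-1}$, and by sub-multiplicativity of operator norms,
\begin{equation*}
\|\bmS_n^{-1}\|_{(r,r)}\le \frac{\|\bmD^{-1}\|_{(r,r)}}{1-\rho}\le \frac{K}{1-\rho}=M.
\end{equation*}
Since $\|\bmD^{-1}(\bmS_n-\bmD)\|_{(r,r)}\le \|\bmD^{-1}\|_{(r,r)}\|\bmS_n-\bmD\|_{(r,r)}\le K\|\bmS_n-\bmD\|_{(r,r)}$, a sufficient condition is $\|\bmS_n-\bmD\|_{(r,r)}\le \rho/K=:t_0$, where $t_0=t_0(M,K)$ is a positive constant depending only on $M$ and $K$.

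Next, I would apply Lemma~\ref{lemmaA3}. If $m'$ from that lemma is too small for the chosen $t_0$, I would enlarge $M$ (which shrinks the required tolerance on $\|\bmS_n-\bmD\|_{(r,r)}$; this only relaxes the constraint) or, equivalently, replace $t_0$ by $\min(t_0,m'/2)$; this only adjusts the constants $M',m$ in the resulting bound and keeps the dependence on $K$ and $M$ alone. Then
\begin{equation*}
\P[\|\bmS_n^{-1}\|_{(r,r)}\ge M]\le \P[\|\bmS_n-\bmD\|_{(r,r)}\ge t_0]\le Mk^2\exp(-mnk^{-2}t_0^{2}),
\end{equation*}
which is the claimed estimate (with $C'=t_0$, the apparent $C'$ in the statement being a constant depending on $M$ and $K$).

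The argument is essentially routine; there is no single hard step, only a bit of care at two places. The mild obstacle is that the Neumann bound is phrased in whichever operator norm $\|\cd\|_{(r,r)}$ one chooses, and one must verify that $\|\bmI+\bmA\|$ being close to $\bmI$ in that norm truly gives invertibility with a controlled inverse; this is standard for sub-multiplicative norms on matrix algebras, so both $r=2$ and $r=\iy$ are handled uniformly without resorting to the dimension-dependent equivalence in Lemma~\ref{lemmaA1}, which would cost an extra factor of $\sqrt{k}$ and defeat the purpose. A small bookkeeping check is also needed to ensure the threshold $t_0$ lies within the validity range $|t|\le m'$ of Lemma~\ref{lemmaA3}, which is resolved by enlarging $M$ as indicated above.
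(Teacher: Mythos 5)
Your proposal is correct and follows essentially the same route as the paper: both reduce the event $\{\|\bmS_n^{-1}\|_{(r,r)}\ge M\}$ to the deviation event $\{\|\bmS_n-\bmD\|_{(r,r)}\ge K^{-1}-M^{-1}\}$ and then invoke Lemma~\ref{lemmaA3}, the only difference being that you package the perturbation step as a Neumann series while the paper uses the resolvent identity $\bmS_n^{-1}-\bmD^{-1}=\bmD^{-1}(\bmD-\bmS_n)\bmS_n^{-1}$ with a self-bounding triangle inequality. Your version is, if anything, slightly more careful about invertibility on the good event and about keeping the threshold within the validity range $|t|\le m'$ of Lemma~\ref{lemmaA3}.
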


\begin{proof}
Note that,
%
\begin{eqnarray}
\|\bmS^{-1}\|_{(r,r)} &\leq& \|\bmD^{-1}\|_{(r,r)} + \|\bmS^{-1} -
\bmD
^{-1}\|_{(r,r)} \nonumber\\
&=& \|\bmD^{-1}\|_{(r,r)} + \|\bmD^{-1}\|_{(r,r)}\|\bmS-\bmD\|_{(r,r)}
\|\bmS^{-1}\|_{(r,r)} \nonumber\\
&\le& K(1+\|\bmS-\bmD\|_{(r,r)} \|\bmS^{-1}\|_{(r,r)}).
\end{eqnarray}
This implies that
\begin{equation*}
\|\bmS^{-1}\|_{(r,r)} \leq\frac{K}{1-\|\bmS-\bmD\|_{(r,r)}K}.
\end{equation*}
Thus, using Lemma \ref{lemmaA3}, we obtain
%
\begin{equation}
\begin{split}
\P\left[\|\bmS^{-1}\|_{(r,r)} \geq M\right] &\leq\P\left[\frac
{K}{1-\|
\bmS-\bmD\|_{(r,r)}K} \geq M\right] \\
&\leq\P\left[\left\|\bmS- \bmD\right\|_{(r,r)} \geq K^{-1}-M^{-1}
\right] \\
&\leq M' k^2\exp(-m nk^{-2}).
\end{split}
\end{equation}

\end{proof}

\bibliographystyle{imsart-nameyear}

\end{document}